\keywords{Group ring, elementary equivalent, universally equivalent, discriminates, axiomatic systems, quasi-identity}
\newtheorem{theorem}{Theorem}[section]
\newtheorem{lemma}[theorem]{Lemma}
\theoremstyle{definition}
\newtheorem{definition}[theorem]{Definition}
\theoremstyle{remark}
\numberwithin{equation}{section}
\theoremstyle{plain}
\newtheorem{corollary}[theorem]{Corollary}
\newtheorem{proposition}[theorem]{Proposition}
\def\Z{\mathbb Z}
\let\cal\mathcal
\begin{document}
\title{The Axiomatics of Free Group Rings}

\author[B.~Fine]{Benjamin Fine}	
\address{Department of Mathematics\\
Fairfield University\\Fairfield, Connecticut 06430\\
United States}	
\email{ben1902@aol.com}  

\author[A.~Gaglione]{Anthony Gaglione}	
\address{Department of Mathematics\\
United States Naval Academy\\
Annapolis, Maryland 21402\\
United States}	

\author[M. Kreuzer]{Martin Kreuzer}	
\address{Faculty of Informatik und Mathematik\\
University Passau\\94030 Passau\\
Germany}	

\author[G. Rosenberger]{Gerhard Rosenberger}	
\address{Fachbereich Mathematik\\
University of Hamburg\\
Bundestrasse 55\\
20146 Hamburg\\
Germany}	

\author[D. Spellman]{Dennis Spellman}	
\address{Department of Mathematics\\
Temple University\\
Philadelphia, Pennsylvania 19122\\
 United States}	


\begin{abstract} In \cite{FGRS1,FGRS2} the relationship between the universal and elementary theory of a group ring $R[G]$ and the corresponding universal and elementary theory of the associated group $G$ and ring $R$ was examined. Here we assume that $R$ is a commutative ring with identity $1 \ne 0$.  Of course, these are relative to an appropriate logical language $L_0,L_1,L_2$ for groups, rings and group rings respectively. Axiom systems for these were provided in \cite{FGRS1}.   In \cite{FGRS1} it was proved that if $R[G]$ is elementarily equivalent to $S[H]$ with respect to $L_{2}$, then simultaneously the group $G$ is
elementarily equivalent to the group $H$ with respect to $L_{0}$, and the ring 
$R$ is elementarily equivalent to the ring $S$ with respect to $L_{1}$.  We then let $F$ be a rank $2$ free group and $\mathbb{Z}$ be the ring of integers. Examining the universal theory of the free group ring $\Z[F]$ the hazy conjecture was made that the universal sentences true in $\Z[F]$ are precisely the universal sentences true in $F$ modified appropriately for group ring theory and the converse that the universal sentences true in $F$ are the universal sentences true in $\Z[F]$ modified appropriately for group theory. In this paper we show this conjecture to be true in terms of axiom systems for $\Z[F]$.

\end{abstract}

\maketitle

\section{Introduction} 

In \cite{FGRS1,FGRS2} the relationship was examined between the universal and elementary theory of a group ring $R[G]$ and the corresponding universal and elementary theory of the associated group $G$ and ring $R$. Here we assume that $R$ is a commutative ring with identity $1 \ne 0$. These are relative to an appropriate logical language $L_0,L_1,L_2$ for groups, rings and group rings respectively. Axiom systems for these were provided in \cite{FGRS1}.   In \cite{FGRS1} it was then proved that if $R[G]$ is elementarily equivalent to $S[H]$ with respect to $L_{2}$ then simultaneously the group $G$ is
elementarily equivalent to the group $H$ with respect to $L_{0}$ and the ring 
$R$ is elementarily equivalent to the ring $S$ with respect to $L_{1}.$ We then let $F$ be a rank $2$ free group and $\mathbb{Z}$ be the ring of integers. We call the group ring $\Z[F]$ a free group ring. It is easy to prove (see \cite{FGRS1}) that all free group rings for non-abelian free groups have the same universal theory. A {\bf Kaplansky group} $G$ is a group $G$ where the group ring $K[G]$ with $K$ a field has no zero divisors. Subsequently in \cite{FGRS3} it was shown that the class of Kaplansky groups is universally axiomatizable.  In \cite{BM} Bakulin and Myasnikov establish a set of axioms for the universal theory of the Kaplansky Groups.

Myasnikov and Remeslennikov \cite{MR} have given axiom systems for the universal theory of non-abelian free groups. In particular they proved that if $F$ is a non-abelian free group then the universal theory of $F$ is axiomatized by (see section 2 for relevant definition) the diagram of $F$, the strict universal Horn sentences of $L_0[F]$  true in $F$ and group commutative transitivity (see sections 3 and 4 for relevant definitons).   In this paper we extend this to axiom systems for free group rings and prove that the universal theory of a free group ring $\Z[F]$ is axiomatized by the diagram of $\Z[F]$, the strict universal Horn sentences of $L_2[\Z[F]]$ true in $\Z[F]$ and ring commutative transitivity when the models are restricted to group rings. Hence if $R[G]$ satisfies the diagram of $\Z[F]$ and the strict universal Horn sentences true in $\Z[F]$ and ring commutative transitivity then $R[G]$ is universally equivalent to $\Z[F]$.    Since the axiom systems are precisely of the same form modified for each theory the hazy conjecture follows.

In the next section we give the necessary preliminaries on group theory, logic and axiom systems. In section 3 we present some straightforward observations on the universal theory of free group rings. Finally in section 4 we prove the main results.

\section{Basic Preliminaries}

For a general algebraic structure, for example a group, a ring, a field or an algebra, $A$, its {\bf elementary theory} is the set of all first-order sentences in a logical language appropriate for that structure, true in $A$. Hence if $F$ is a given free group, its elementary theory consists of all first-order sentences in a language appropriate for group theory that are true in $F$. Two algebraic structures are {\bf elementary equivalent} or {\bf elementarily equivalent} if they have the same elementary theory. The Tarski theorems proved by Kharlampovich and Myasnikov and independently by Sela (see \cite{FGMRS}) say that all non-abelian free groups satisfy the same elementary theory. Kharlampovich and Myasnikov also showed that the elementary theory of free groups is decidable, that is, there is an algorithm to decide if any elementary sentence is true in all free groups or not. For a group ring they have proved that the first-order theory (in the language of ring theory) is not decidable and have studied equations over group rings especially for torsion-free hyperbolic groups.
  
The set of universal sentences in an algebraic structure $A$ that are true in $A$ is its {\bf universal theory} while two structures are {\bf universally equivalent} if they have the same universal theory. It is straightforward to show that all non-abelian free groups have the same universal theory (see \cite{FGMRS}). As part of the general solution to the Tarski theorems it was shown that a finitely generated non-abelian group is {\bf universally free} (that is has the same universal theory as a non-abelian free group) if and only if it is a limit group.  

If $F$ is a non-abelian free group then a free group ring is $\Z[F]$. In this note we consider axiom systems for  universal theories of free group rings. 

In \cite{FGRS1} we introduced three first-order languages with equality $L_{n}$, $n= 0, 1, 2$ and  listed axiom systems $T_{n}$ expressed in $L_{n}$. We view a group as a model of $
T_{0}$, a ring as a model of $T_{1}$. Moreover, we view the class of
group rings as a subclass of the model class of $T_{2}$. 

In the next section we make some observations on free group rings. Then in section 4 we review some further necessary material on elementary and universal theory. In section 4 we also present our main results.

We first review the elementary and universal theory of groups. A first-order sentence in group theory has logical symbols $\forall,\exists, \vee,\wedge, \sim$ but no quantification over sets.  A first-order theorem in a free group is a theorem that says a first-order sentence is true in all non-abelian free groups. We make this a bit more precise:

We start with a first-order language appropriate for group theory. This language, which we denote by $L_0$, is
the first-order language with equality containing a binary operation symbol $\bullet$, a unary
operation symbol $^{-1}$ and a constant symbol $1$.  A {\bf universal sentence} of $L_0$ is one
of the form $\forall{\overline{x}} \{\phi(\overline{x})\}$ where $\overline{x}$ is a tuple of
distinct variables, $\phi(\overline{x})$ is a formula of $L_0$ containing no quantifiers and
containing at most the variables of $\overline{x}$.  Similarly an {\bf existential sentence} is
one of the form $\exists{\overline{x}}\{\phi(\overline{x})\}$ where $\overline{x}$ and
$\phi(\overline{x})$ are as above. A {\bf universal-existential sentence} is one of the form
$\forall{\overline{x}} \exists{\overline{y}}\{\phi(\overline{x},\overline{y})\}$. Similarly
defined is an {\bf existential-universal sentence}. It is known that every sentence of $L_0$ is
logically equivalent to one of the form ${Q_1}_{x_1}...{Q_n}_{x_n} \phi(\overline{x})$ where
$\overline{x} = (x_1,...,x_n)$ is a tuple of distinct variables, each $Q_i$ for $i = 1,...,n$
is a quantifier, either $\forall$ or $\exists$, and $\phi(\overline {x})$ is a formula of $L_0$
containing no quantifiers and containing free at most the variables $x_1,...,x_n$.  Further
vacuous quantifications are permitted. Finally a {\bf positive sentence} is one logically
equivalent to a sentence constructed using (at most) the connectives
$\lor,\land,\forall,\exists$.   

If $G$ is a group then the {\bf universal theory} of $G$ consists of the set of all universal
sentences of $L_0$ true in $G$. We denote the universal theory of a group $G$ by $Th_\forall(G)$. 
Since any universal sentence is equivalent to the negation of an existential sentence it follows that
two groups have the same universal theory if and only if they have the same {\bf existential theory}.
The set of all sentences of $L_0$ true in $G$ is called the {\bf first-order
theory} or the {\bf elementary theory} of $G$.  We denote this by $Th(G)$. We note that being
{\bf first-order} or {\bf elementary} means that in the intended interpretation of any formula
or sentence all of the variables (free or bound) are assumed to take on as values only
individual group elements - never, for example, subsets of, nor functions on, the group in which
they are interpreted.

We say that two groups $G$ and $H$ are {\bf elementarily equivalent} (symbolically $G \equiv H$) if
they have the same first-order theory, that is $Th(G) = Th(H)$.

Prior to the solution of the Tarski problems, it was asked whether there exist finitely generated non-free {\bf elementary free groups}. By this it was meant that if all countable non-abelian free groups do have the same first-order theory do there exist finitely generated non-free groups with exactly the same first-order theory as the class of non-abelian free groups. We note that if we omit the requirement of finite generation we have $^*F = F^I/D$ is elementarily free but not free if $F$ is a free non-abelian group and $D$ is a nonprincipal ultrafilter. However $^*F$ is not finitely generated or even countable.

In the finitely generated case, the answer is yes, and both the Kharlampovich-Myasnikov solution and the Sela solution provide a complete characterization of the finitely generated elementary free groups.  In the Kharlampovich-Myasnikov formulation these are given as a special class of what are termed NTQ groups (see \cite{FGMRS}).The primary examples of non-free elementary free groups are the orientable surface groups of genus $g \ge 2$ and the non-orientable surface groups of genus $g \ge 4$ (see \cite{FGMRS}).  Recall that a {\bf surface group} is the fundamental group of a compact surface.  If the surface is orientable it is an orientable surface group otherwise a non-orientable surface group.

\section{Some Observations on Free Group Rings}

If $L$ is a first-order language with equality let us call $L$-structures 
\textbf{algebras }(as opposed to \textbf{relational systems) }provided $L$
contains no relational symbols. If $A$ and $B$ are $L$-algebras we say that 
$A$ \textbf{discriminates }$B$ provided, given finitely many pairs $%
(x_{i},y_{i})$ of unequal elements of $B$, $x_{i}\neq y_{i},~i=1,...,n,$
there is a homomorphism $\varphi \colon B\rightarrow A$ such that $\varphi
(x_{i})\neq \varphi (y_{i})$ for all $i=1,...,n$.

\smallskip

If $R$ is a ring with identity $1\neq 0$, we let $U(R)$ be its group of units.

\smallskip

\begin{definition}
Let $R$ be a commutative ring with identity $1\neq 0$ and let $G$ be a
group. An element of $U(R[G])$ is said to be a \textbf{trivial unit}
provided it has the form $ug$ where $u\in U(R)$ and $g\in G$.
\end{definition}

\smallskip

We need the following ideas:

\begin{definition}
Let $F=\langle a_{1},a_{2};~\rangle $ and let $G$ be a group. $G$ is \textbf{%
residually free }provided, for every $g\in G\backslash \{1\}$, there is a
homomorphism $\varphi \colon G\rightarrow F$ such that $\varphi (g)\neq 1$. Extending this a group is {\bf fully residually free} porvided for every finite set $g_1,...,g_n$ of nontrivial elements of $G$ there is a
homomorphism $\varphi \colon G\rightarrow F$ such that $\varphi (g_i)\neq 1$ for each $g_i$. 
\end{definition}

\begin{definition}
A group $G$ is\textbf{\ orderable }provided it admits a linear order $<$
satisfying the conditions that $hg_{1}<hg_{2}$ whenever $g_{1}<g_{2}$ and $%
g_{1}h<g_{2}h$ whenever $g_{1}<g_{2}$ as $g_{1},g_{2}$ and $h$ vary over $G$.
\end{definition}

\smallskip

It is well known that free groups are orderable. See e.g. \cite{N}. One can find,
for example in Passman's book \cite{P}, that, if $K$ is a field and $G$ is an
orderable group, then $K[G]$ has trivial units only. From that it easily
follows that if $D$ is an integral domain and $G$ is an orderable group,
then $D[G]$ has trivial units only. In analogy with groups a ring $R$ is {\bf residually}-$\Z$ if for any nontrivial $r \in R$ there exists a homomorhism $\phi$ onto $\Z$ such that $\phi(r) \ne 0$. A ring $R$ with identity $1\neq 0$ which is
discriminated by $\mathbb{Z}$ is said to be $\omega $-\textbf{residually }$%
\mathbb{Z}$. Such rings have characteristic zero so $\mathbb{Z}\leq R$. A
ring all of whose finitely generated subrings are $\omega $-residually $%
\mathbb{Z}$ is said to be \textbf{locally} $\omega $-\textbf{residually }$%
\mathbb{Z}$.

It is known that all nonabelian free groups have the same universal theory. A group is {\bf universally free} if it has the same universal theory as a nonabelian free group. Commutative transitivity ( the CT Property), that is the property that commutativity is transitive on on non-identity elements, plays a fundamental role in the study of universally free groups and hence in the solution of the Tarski problems via the following theorem due independently to Gaglione and Spellman \cite{GS} and Remeslennikov \cite{Re}.

\begin{theorem} [\cite{GS,Re}] Suppose $G$ is residually free.  Then the following are equivalent:
\begin{enumerate}[(1)]
\item $G$ is fully residually free, 

\item $G$ is commutative transitive, 

\item $G$ is universally free if non-abelian.
\end{enumerate}
\end{theorem}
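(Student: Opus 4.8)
The plan is to establish the equivalences by the cycle $(1)\Rightarrow(3)\Rightarrow(2)\Rightarrow(1)$, after recording the easy direction $(1)\Rightarrow(2)$. Throughout I use that the free group $F$ is commutative transitive (centralizers of nontrivial elements of a free group are cyclic) and that all non-abelian free groups share one universal theory. For $(1)\Rightarrow(2)$: given $a,b,c$ with $b\neq1$, $[a,b]=1=[b,c]$, suppose $[a,c]\neq1$; full residual freeness applied to the two nontrivial elements $b$ and $[a,c]$ yields $\varphi\colon G\to F$ with $\varphi(b)\neq1$ and $\varphi([a,c])\neq1$, but then $\varphi(b)\neq1$ commutes with $\varphi(a)$ and $\varphi(c)$, so commutative transitivity of $F$ forces $\varphi([a,c])=1$, a contradiction. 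For $(3)\Rightarrow(2)$: commutative transitivity is the single universal sentence $\forall a\,\forall b\,\forall c\,\big((b\neq1\wedge[a,b]=1\wedge[b,c]=1)\to[a,c]=1\big)$, which holds in $F$; if $G$ is non-abelian it is universally free and hence satisfies it, and if $G$ is abelian the sentence holds trivially.

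For $(1)\Rightarrow(3)$, assume $G$ fully residually free and non-abelian. Full residual freeness is exactly the statement that $F$ discriminates $G$ (separating unequal pairs $x_i\neq y_i$ amounts to keeping $x_iy_i^{-1}$ nontrivial), so the directed family of separating homomorphisms $\varphi_S\colon G\to F$, indexed by finite subsets $S$ of $G\setminus\{1\}$, assembles through a suitable ultrafilter into an embedding of $G$ into an ultrapower ${}^{*}F$ of $F$. Since ${}^{*}F\equiv F$ and universal sentences pass to subgroups, $Th_\forall(F)=Th_\forall({}^{*}F)\subseteq Th_\forall(G)$. For the reverse inclusion I use that two non-commuting elements of a fully residually free group generate a free group of rank $2$; as $G$ is non-abelian it contains such a copy of $F_2$, so $Th_\forall(G)\subseteq Th_\forall(F_2)=Th_\forall(F)$. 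Hence $G$ is universally free.

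The substantial step is $(2)\Rightarrow(1)$. The guiding reformulation is that full residual freeness coincides with embeddability of $G$ into an ultrapower of $F$, and that the only obstruction is a pair of commuting elements of complementary support, which a commutative transitive group cannot carry. I would begin with the observation that a \emph{bad pair} — nontrivial $u,v$ kept simultaneously nontrivial by no single $\varphi\colon G\to F$ — must commute, since then $\varphi([u,v])=1$ for every $\varphi$ and residual freeness gives $[u,v]=1$. To separate a finite set $g_1,\dots,g_n$ I pass to $H=\langle g_1,\dots,g_n\rangle$, again residually free and commutative transitive, and embed it subdirectly into a finite product $F_1\times\cdots\times F_m$ of free groups (finite generation lets finitely many separating maps suffice), with $m$ minimal. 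If $m\geq2$, the projection kernels are nontrivial normal subgroups of $H$; elements of complementary kernels commute by disjoint support, so, since commutative transitivity says the centralizer of each nontrivial element is abelian, these kernels are abelian, hence cyclic (abelian subgroups of free groups are cyclic). A nontrivial cyclic \emph{normal} subgroup forces its free factor to be cyclic, collapsing the product and making $H$ free abelian; together with the case $m=1$ (where $H\leq F$ is free) this exhibits a single homomorphism separating $g_1,\dots,g_n$, directly or through $\mathbb{Z}^k\to\mathbb{Z}\hookrightarrow F$.

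The main obstacle is this collapsing argument, and two points carry the real weight; these are, I expect, the crux of the proofs of Gaglione--Spellman and Remeslennikov. First, for $m\geq3$ the complementary kernels need not have disjoint support, so one must regroup or iterate over the factors while keeping the centralizer-is-abelian input usable, and controlling this bookkeeping is the technical heart. Second, the separating homomorphism produced above lives on the finitely generated subgroup $H$, whereas full residual freeness of $G$ demands one defined on all of $G$; bridging this gap is cleanest through the ultrapower picture, where, using that commutative transitivity is a local (indeed first-order universal) property, the finitely generated conclusion transfers to arbitrary $G$.
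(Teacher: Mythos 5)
Your three directions (1)$\Rightarrow$(2), (3)$\Rightarrow$(2) and (1)$\Rightarrow$(3) are essentially correct (note that the paper itself gives no proof of this theorem --- it quotes it from \cite{GS,Re} --- so your attempt has to stand on its own). The fact you invoke in (1)$\Rightarrow$(3), that two non-commuting elements of a fully residually free group generate a copy of $F_2$, is a genuine theorem going back to B.~Baumslag \cite{B}, and is provable with your tools: pick $\varphi\colon G\to F$ with $\varphi([x,y])\neq 1$, observe that $\varphi(x),\varphi(y)$ form a basis of the free group $\varphi(\langle x,y\rangle)$, and use the resulting retraction to see that $\varphi$ is injective on $\langle x,y\rangle$. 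So everything hinges on (2)$\Rightarrow$(1), which is the real content of the theorem, and that is exactly where your argument breaks down.

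The fatal step is the parenthetical claim ``finite generation lets finitely many separating maps suffice.'' Finitely many homomorphisms to $F$ can separate a prescribed finite set of elements, but a \emph{subdirect embedding} $H\hookrightarrow F_1\times\cdots\times F_m$ requires separating all (infinitely many) nontrivial elements of $H$, and no finite family need do this. Concretely, let $H=S_2$ be the orientable surface group of genus $2$, which this very paper cites as the primary example of a non-free elementary free group: it is finitely generated, residually free and commutative transitive (indeed fully residually free), yet it embeds in \emph{no} finite direct product of free groups. Indeed, in any such embedding all projection kernels $K_i$ would be nontrivial (since $S_2$ is not free), and your own disjoint-support-plus-CT argument then produces a nontrivial abelian, hence cyclic, normal subgroup of $S_2$; its centralizer would be abelian (by CT) of index at most $2$, making $S_2$ virtually abelian, which is absurd since $S_2$ surjects onto $F_2$. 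The same example shows that the end state of your strategy --- that $H$ must be free or free abelian --- is simply false for finitely generated residually free CT groups, so your approach proves too much and cannot be repaired by regrouping factors or by the ultrapower transfer in your last paragraph; the difficulty is not bookkeeping. The classical proofs proceed along entirely different lines: B.~Baumslag \cite{B} establishes (1)$\Leftrightarrow$(2) by induction on the number of elements to be separated, using CT to replace several elements by fewer (for instance, a non-commuting pair $g_1,g_2$ by the single element $[g_1,g_2]$, since any homomorphism keeping $[g_1,g_2]$ nontrivial keeps both $g_i$ nontrivial), with a separate high-powers trick for commuting elements, while \cite{GS,Re} add the equivalence with (3) by model-theoretic arguments; none of them ever places $H$ inside a finite product of free groups.
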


In \cite{FGRS1}  an analog of this theorem within the context of free group rings was proved. 

We now make some straightforward observations about free group rings showing the ties to free groups. First is the observation  that group rings satisfy an analog of the Nielsen-Schreier theorem. In analogy with other algebraic structures a sub-(group ring) is a subset of a group ring which is also a group ring under the same operations. 

\begin{theorem} Let $A = \Z[F]$ be a free group ring. If $B$ is a sub-(group-ring) of $A$ then $B$ is a free group ring, that is $B = R[H]$ for some ring $R$ and some free group $H$.
\end{theorem}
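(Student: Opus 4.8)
The plan is to mimic the Nielsen--Schreier theorem by exploiting the two facts about $\Z[F]$ that flow from $F$ being orderable: that $\Z[F]$ is an integral domain and that its only units are the trivial ones, $U(\Z[F]) = \{\pm g : g \in F\}$ (both are classical; cf. Passman \cite{P}, from which the domain property follows by the same ordering argument that yields trivial units). I would write the sub-(group-ring) as $B = R[H]$, where $R$ is its coefficient ring and $H$ is its group of group-like basis elements, keeping in mind that $B$ carries these operations as a subset of $\Z[F]$.

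First I would pin down the identity. Since $1_B = 1_B^2$ is a nonzero idempotent and $\Z[F]$ has no zero divisors, we must have $1_B = 1_{\Z[F]}$; hence $B$ is a unital subring, $\Z \subseteq R \subseteq B$, and $B$ has characteristic $0$. The purpose of this step is that inverses taken in $B$ are then genuine inverses in $\Z[F]$, so every group-like element $1_R \cdot h$ with $h \in H$ is a unit of $\Z[F]$ and therefore lies in $\{\pm g : g \in F\}$. Because distinct group elements remain linearly independent over $R \ne 0$, the assignment $h \mapsto 1_R \cdot h$ embeds $H$ into $U(\Z[F]) \cong \{\pm 1\} \times F$; call the image $\widetilde{H}$.

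The crux is to show that $\widetilde{H}$ meets the torsion factor $\{\pm 1\}$ trivially, equivalently that $-1$ is not a group-like element. If $-1 = 1_R \cdot h_0$ for some $h_0 \in H$, then comparison with $-1 = (-1_R)\cdot 1_H$ in the unique normal form of $R[H]$ forces $h_0 = 1_H$ and $1_R = -1_R$, contradicting characteristic $0$. Hence $\widetilde{H} \cap \{\pm 1\} = \{1\}$, so the projection $\{\pm 1\} \times F \to F$ restricts to an injection on $\widetilde{H}$ and identifies $H \cong \widetilde{H}$ with a subgroup of the free group $F$. By the Nielsen--Schreier theorem this subgroup is free, so $H$ is free and $B = R[H]$ is a free group ring, as claimed.

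I expect the main obstacle to be the bookkeeping in the previous two paragraphs: one must keep the intrinsic group-ring structure of $B$ (its own $R$, its $H$, and its normal form) cleanly separated from the ambient arithmetic of $\Z[F]$, and the identity-matching step $1_B = 1_{\Z[F]}$ is the linchpin that lets the two interact. Without it the group-like elements of $B$ need not be units of $\Z[F]$, and the embedding of $H$ into the trivial units — on which the whole reduction to Nielsen--Schreier rests — would break down.
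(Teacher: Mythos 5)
Your proof is correct, and it rests on the same two pillars as the paper's argument: the triviality of the units of $\Z[F]$ (coming from orderability of $F$) and the Nielsen--Schreier theorem. The difference lies in how the sign ambiguity $\pm$ is resolved, and there your route is genuinely different and tighter. The paper, after placing $H$ inside $F \cup -F$, assumes $H$ is not free, takes a ``nontrivial dependence relation'' $W = 1$ among its elements, and argues by parity: an even number of factors from $-F$ yields a relation $W_1 = 1$ in $F$, declared a contradiction, while an odd number yields $-W_1 = 1$, which is killed by squaring. As written, the even case is not actually a contradiction: elements of a free group can satisfy nontrivial relations (e.g.\ $a$ and $a^2$ satisfy $x_2 = x_1^2$), and non-freeness of $H$ only guarantees that every generating set satisfies some relation, so the paper's argument is loose exactly where the work has to happen. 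Your proof supplies the missing mechanism: you split the unit group as $\{\pm 1\}\times F$, use uniqueness of normal form in $B = R[H]$ together with characteristic zero to rule out $-1$ as a group-like element, and conclude that the coordinate projection onto $F$ is injective on $H$; then $H$ is literally isomorphic to a subgroup of $F$, and Nielsen--Schreier finishes. You also verify $1_B = 1_{\Z[F]}$ (idempotents in a domain are trivial), a point the paper tacitly assumes but which is what makes the group-like elements of $B$ units of $\Z[F]$ in the first place. In short: same skeleton, but your execution of the decisive step --- replacing the parity contradiction by an injective projection justified through linear independence --- is cleaner and closes a real gap in the paper's version.
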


\begin{proof} Since $F$ is free the only units in $A$ are $\pm 1$, the elements of $F$ and $-f$ for $f \in F$. Since $B$ is a subring of $A$, $H$ is included in the set of units in $A$, that is $F \cup -F$. Now if $H \subset F$, then $H$ is a free group by the Nielsen-Schreier theorem. Assume to deduce a contradiction that $H$ is not free. Then there must be a nontrivial dependence relation among the elements of $H$ say $W= 1$. If the number of elements of $W$ that comes from $-F$ is even then when these are multiplied out we get $W_1 = 1$ which is a nontrivial dependence relation in $F$ and so a contradiction. Otherwise if the number of elements of $W$ that come from $-F$ is odd then when these are multiplied out we get $-W_1 = 1 $. When this is squared we get a contradiction.  
\end{proof}

A finitely generated free group cannot be isomorphic to a proper quotient of itself \cite{MKS}. This is the well-known Hopfian property and is extremely important in free group theory. We prove two strong  generalizations of the Hopfian property to certain group rings which in turn implies that a free group ring is Hopfian within the class of group rings, that is within the class of group rings, $A[G]$ cannot be isomorphic to a proper quotient of $A$.

We first prove.

\begin{theorem} Let $G$ be a finitely generated residually finite group. Then the group ring $A = \Z[G]$  is Hopfian relative to group rings, that is $A$ cannot be isomorphic, within the class of group rings, to a proper quotient of $A$.
\end{theorem}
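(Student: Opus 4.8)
The plan is to derive the statement from two structural facts about $A=\Z[G]$: that it is finitely generated as a ring, and that it is residually finite as a ring. Once both are available, a Malcev-style counting argument rules out any surjective ring endomorphism of $A$ with nonzero kernel. Since any isomorphism of $A$ onto a proper quotient $A/I$ produces exactly such an endomorphism (compose the quotient map $A \twoheadrightarrow A/I$ with the isomorphism $A/I \xrightarrow{\sim} A$, obtaining a surjection $\varphi\colon A \to A$ with $\ker\varphi = I \ne 0$), the Hopfian conclusion follows, and this holds whether or not the quotient is a priori assumed to be a group ring.

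First I would record that $A$ is finitely generated as a ring. If $g_1,\dots,g_k$ generate $G$ as a group, then $g_1^{\pm 1},\dots,g_k^{\pm 1}$ generate $A$ as a ring, because $\Z$ is generated by $1$ and every element of $A$ is a $\Z$-linear combination of products of the $g_i^{\pm 1}$. Consequently, for any finite ring $F$, every unital ring homomorphism $\psi\colon A \to F$ is determined by its values on this finite generating set, so the set $\mathrm{Hom}(A,F)$ of unital homomorphisms is finite, with $|\mathrm{Hom}(A,F)| \le |F|^{2k}$.

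Next I would establish that $A$ is residually finite as a ring: for each nonzero $\alpha = \sum_{i=1}^{n} r_i g_i \in A$ (with the $g_i$ distinct and the $r_i$ nonzero) there is a unital homomorphism from $A$ onto a finite ring that does not annihilate $\alpha$. Since the finitely many elements $g_i g_j^{-1}$ with $i \ne j$ are nontrivial and $G$ is residually finite, I can choose a normal subgroup $N$ of finite index in $G$ with $g_i g_j^{-1} \notin N$ for all $i \ne j$; then in $\Z[G/N]$ the images $\overline{g_i}$ are distinct, so the image of $\alpha$ equals $\sum_i r_i \overline{g_i} \ne 0$. Choosing a prime $p$ not dividing some fixed $r_{i_0}$ and reducing coefficients modulo $p$ yields a unital homomorphism $A \to \mathbb{F}_p[G/N]$ onto a finite ring whose value on $\alpha$ is still nonzero.

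Finally I would run the Hopfian argument. Suppose $\varphi\colon A \to A$ is a surjective ring homomorphism with $\ker\varphi \ne 0$, and pick $0 \ne a \in \ker\varphi$. By the previous step there is a unital surjection $\theta\colon A \to F$ onto a finite ring with $\theta(a) \ne 0$. Precomposition $\psi \mapsto \psi\circ\varphi$ maps the finite set $\mathrm{Hom}(A,F)$ into itself and is injective, since $\varphi$ surjective forces $\psi_1\circ\varphi = \psi_2\circ\varphi$ to give $\psi_1 = \psi_2$; an injective self-map of a finite set is onto, so $\theta = \psi\circ\varphi$ for some $\psi$. Then $\theta(a) = \psi(\varphi(a)) = \psi(0) = 0$, contradicting $\theta(a) \ne 0$. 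Hence every surjective endomorphism of $A$ is injective, and $A$ is Hopfian, in particular relative to group rings. I expect the one genuinely delicate point to be the residual-finiteness step, where one must arrange simultaneously that the support of $\alpha$ survives in a finite group quotient and that the coefficients do not all collapse after reduction modulo $p$; the finite-generation bound and the counting argument are then routine.
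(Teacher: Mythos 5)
Your proof is correct and follows essentially the same route as the paper: both arguments reduce the theorem to showing that $A=\Z[G]$ is finitely generated and residually finite as a ring (separating the support of a nonzero element in a finite quotient of $G$, then reducing coefficients modulo a suitable prime), and then invoke the Malcev-style counting argument. The only difference is that you write out that counting argument on $\mathrm{Hom}(A,F)$ explicitly, where the paper instead cites it as a mimicking of the proof in Robinson's book; as a small bonus, your version makes visible that $A$ is Hopfian as a ring outright, not merely relative to the class of group rings.
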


It is well-known in group theory that
a finitely generated residually finite group must be Hopfian.
Straightforward proofs of this can be found in the books by D. Robinson \cite{DR} and Magnus, Karrass and Solitar \cite{MKS}. Mimicking exactly the proof in Robinson's book we get the extended result for rings.

\begin{theorem} Let $R$ be a finitely generated ring. If $R$ is residually finite as a ring then $R$ cannot be isomorphic to a proper quotient of itself.
\end{theorem}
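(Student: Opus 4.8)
The plan is to mimic the classical argument that a finitely generated residually finite group is Hopfian, replacing ``subgroup of finite index'' throughout by ``two-sided ideal of finite index.'' First I would reformulate the conclusion: $R$ fails to be isomorphic to a proper quotient of itself precisely when every surjective ring endomorphism $\phi\colon R\to R$ is injective. Indeed, an isomorphism $R\cong R/I$ with $I\neq 0$, composed with the canonical projection $R\to R/I$, yields a surjective endomorphism whose kernel is $I\neq 0$; conversely a non-injective surjection exhibits $R$ as isomorphic to a proper quotient. So it suffices to show that a surjective endomorphism of $R$ is an automorphism.

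The key finiteness input is that, for each integer $n$, a finitely generated ring $R$ has only finitely many two-sided ideals $J$ with $|R/J|=n$. I would prove this by observing that such an ideal is the kernel of a surjection $R\to S$ onto a finite ring $S$ of order $n$; there are only finitely many finite rings of order $n$ up to isomorphism, and since $R$ is generated by finitely many elements as a ring, any homomorphism out of $R$ is determined by the images of those finitely many generators, of which there are at most $n^{\,(\textnormal{number of generators})}$ choices for each target $S$. Hence there are only finitely many such surjections, and therefore only finitely many kernels. Write $\mathcal I_n$ for this finite set of ideals of index $n$.

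Next, given a surjective endomorphism $\phi$, I would show that $J\mapsto\phi^{-1}(J)$ is a bijection of $\mathcal I_n$ onto itself. It lands in $\mathcal I_n$ because the composite $R\xrightarrow{\phi}R\to R/J$ is surjective (as $\phi$ is) with kernel $\phi^{-1}(J)$, giving $R/\phi^{-1}(J)\cong R/J$; it is injective because surjectivity of $\phi$ gives $\phi(\phi^{-1}(J))=J$, so $\phi^{-1}(J_1)=\phi^{-1}(J_2)$ forces $J_1=J_2$, and an injective self-map of a finite set is onto. Now suppose toward a contradiction that $I=\ker\phi\neq 0$ and choose $0\neq r\in I$. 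Residual finiteness supplies some $J\in\mathcal I_n$ with $r\notin J$. Since $\phi(I)=0$ lies in every ideal, we have $I\subseteq\phi^{-1}(J')$ for all $J'$; and surjectivity of the map $J'\mapsto\phi^{-1}(J')$ on $\mathcal I_n$ produces a $J'$ with $\phi^{-1}(J')=J$, whence $I\subseteq J$ and $r\in J$, contradicting $r\notin J$. Thus $\ker\phi=0$ and $\phi$ is an automorphism, as required.

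The step I expect to be the main obstacle is the finiteness of $\mathcal I_n$: stating it cleanly requires being careful that ``index $n$'' means $|R/J|=n$, and that it is finite generation of $R$ \emph{as a ring} (not merely as an abelian group) that bounds the number of homomorphisms out of $R$. Once this counting lemma is in hand, everything else is a faithful transcription of the group-theoretic proof.
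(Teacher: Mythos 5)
Your proposal is correct and takes exactly the route the paper intends: the paper never writes this argument out, stating only that the result follows by ``mimicking exactly the proof in Robinson's book,'' i.e., the classical Mal'cev argument for finitely generated residually finite groups (the proof environment that follows the theorem in the paper actually proves the application to $\Z[G]$, not this statement). Your transcription of that argument to rings --- finitely many two-sided ideals of each finite index $n$ in a finitely generated ring, and $J\mapsto\phi^{-1}(J)$ an injective, hence bijective, self-map of that finite set for any surjective endomorphism $\phi$ --- is precisely the adaptation the paper has in mind, carried out correctly and with the counting lemma the paper leaves to the reference properly justified.
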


\begin{proof} To prove the result for $\Z[G]$ and $G$ a residually finite, finitely generated group we need only show that $\Z[G]$ is both finitely generated and residually finite as a ring.

Recall that a residually finite group (or ring) $G$ is fully residually finite. To see this, notice that if $g_1,..,g_n$ are finitely many nontrivial elements of $G$ then there exist finite groups $H_i$ and epimorphisms $\phi_i\colon G \rightarrow H_i$ such that $\phi_i(g_i) \ne 1$ for all $i = 1...,n$. Let $H = H_1 \times ... \times H_n$. Then $H$ is finite and there exists an epimorphism $ \phi = \phi_i \times .... \times \phi_n$ from $G \rightarrow H$ with $\phi(g_i) \ne 1$ for $i = 1,..,n$.

Now let $G$ be a finitely generated residually finite group and $A = \Z[G]$ the integral group ring over $G$. Let $S = \{s_1,..,s_n\}$ be a set of generators for $G$ and $S^{-1} = \{s_1^{-1},...,s_n^{-1}\}$. Then $\{S \cup S^{-1}\}$ gives a finite set of ring generators for $A$.  Hence $A$ is finitely generated as a ring.  We must show that $A$ is residually finte. 

Suppose that $r = n_1g_1 + .... + n_mg_m \ne 0$ is an arbitrary nonzero element in $A$ with distinct elements $g_1,...,g_m \in G$. Since $G$ is fully residually finite there exists a finite group $H$ and a group epimorphism $\phi\colon  G \rightarrow H$ which does not annihilate any $g_ig_j^{-1}$ with $1 \le i,j \le m$.  We then get an epimorphism $\psi\colon \Z[G] \rightarrow \Z[H]$ by $\psi\colon \sum_j n_jg_j \rightarrow \sum_j n_j \phi(g_j)$ and further $\psi(r) = \psi(n_1g_1 + ... + n_mg_m) = n_1\phi(g_1) + ... + n_m\phi(g_m)  \ne 0$ in $\Z[H]$. 

Choose a prime $p$ large enough so that $p \nmid n_i$ for $1 \le i \le m$. Then there exists a ring epimorphism $\pi\colon  \Z[H] \rightarrow \Z_p[H]$ given by
$$ \pi\colon  \sum_{h_t} n_th_t \rightarrow \sum_{h_t} \overline{n_t}h_t$$
where $\overline{n_t}$ is the residue class for $n_t$ mod $p$. Then 
$$\pi(r)  = \pi(n_1g_1 + ... + n_mg_m) = \overline{n_1}\phi(g_1) _ .... +\overline{n_m}\phi(g_m) \ne 0 $$
in $\Z_p[H].$
The ring $\Z_p[H]$ is a finite ring so $\pi\psi$ is an epimorphism from $\Z[G]$ to $\Z_p[H]$ which does not annihilate $r$. Since $r$ is an arbitrary nonzero element of $A$ it follows that $A = \Z[G]$ is residually finite.

Therefore $\Z[G]$ is both finitely generated and residually finite as a ring and hence is Hopfian.
\end{proof}

Since finitely generated free groups are residually finite we  get directly.

\begin{corollary} Let $F$ be a finitely generated free group. Then the free group ring $\Z[F]$ is Hopfian.
\end{corollary}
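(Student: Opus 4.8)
The plan is to obtain this as an immediate specialization of the preceding theorem on residually finite groups. First I would invoke the classical fact that a finitely generated free group $F$ is residually finite; this is well known (see, e.g., \cite{MKS}) and may be cited rather than reproved, since it follows, for instance, from the faithful embedding of $F$ into $\mathrm{SL}_2(\Z)$ whose congruence quotients separate any nontrivial element from the identity, or directly from M.~Hall's work on subgroups of finite index. Thus $F$ is precisely a group of the kind hypothesized in the theorem, namely finitely generated and residually finite.

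With this input secured, I would apply the theorem asserting that whenever $G$ is a finitely generated residually finite group, the integral group ring $\Z[G]$ is Hopfian relative to the class of group rings. Taking $G = F$, that theorem gives at once that $\Z[F]$ cannot be isomorphic, within the class of group rings, to a proper quotient of itself. This is exactly the assertion of the corollary, so no further argument is needed: the proof consists entirely in checking that the hypotheses of the theorem hold for the free group $F$ and then quoting its conclusion.

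Because the statement is a pure corollary, there is no real obstacle to overcome in the argument itself. The only points meriting care are interpretive rather than technical: one must read ``$\Z[F]$ is Hopfian'' in the same sense as in the theorem, that is, Hopfian relative to the class of group rings (not Hopfian as an abstract ring, which is a stronger and separate assertion), and one must ensure that the residual finiteness of $F$ is invoked with $F$ finitely generated, as the theorem's proof used both the finite generation of $\Z[G]$ as a ring and the full residual finiteness derived from residual finiteness. Once these are noted, the conclusion follows directly.
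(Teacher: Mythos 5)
Your proposal is correct and follows exactly the paper's route: the paper derives the corollary in one line ("Since finitely generated free groups are residually finite we get directly") from the preceding theorems, just as you do by checking residual finiteness of $F$ and quoting the theorem. If anything, you are slightly more conservative than necessary, since the paper's proof actually shows $\Z[F]$ is finitely generated and residually finite \emph{as a ring}, so the stronger conclusion (Hopfian as a ring, not merely relative to group rings) is also available.
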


Essentially the same proof gives the following stronger generalization.

\begin{theorem} Let $R$ be a finitely generated residually finite commutative ring with identity $1 \ne 0$ and $G$ a finitely generated residually finite group. Then the group ring $R[G]$ is finitely generated and residually finite and hence Hopfian.
\end{theorem}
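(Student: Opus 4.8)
The plan is to follow the template of the preceding theorem on $\Z[G]$, replacing the reduction modulo a prime — the device that turned the infinite ring $\Z[H]$ into the finite ring $\Z_p[H]$ — by an appeal to the residual finiteness of $R$. As in that proof the argument splits into showing that $R[G]$ is finitely generated as a ring and that it is residually finite as a ring; the Hopfian conclusion is then immediate from the earlier theorem that a finitely generated residually finite ring cannot be isomorphic to a proper quotient of itself.

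First I would dispose of finite generation. If $r_{1},\dots,r_{k}$ generate $R$ as a ring and $s_{1},\dots,s_{n}$ generate $G$ as a group, then the finite set $\{r_{1},\dots,r_{k}\}\cup\{s_{1},s_{1}^{-1},\dots,s_{n},s_{n}^{-1}\}$, viewed inside $R[G]$ (the $r_{i}$ as coefficients on the identity of $G$, the $s_{j}^{\pm1}$ as group elements), generates $R[G]$ as a ring: every element of $R[G]$ is a finite $R$-combination of elements of $G$, each coefficient lies in the subring generated by the $r_{i}$, and each group element is a word in the $s_{j}^{\pm1}$. This parallels the $\Z[G]$ case verbatim and is routine.

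The substantive step is residual finiteness. Take a nonzero element $r=\sum_{i=1}^{m} c_{i}g_{i}$ of $R[G]$, with the $g_{i}\in G$ distinct and each $c_{i}\in R$ nonzero. As before, full residual finiteness of $G$ (which follows from residual finiteness, since only finitely many elements are involved) furnishes a finite group $H$ and an epimorphism $\phi\colon G\to H$ annihilating no $g_{i}g_{j}^{-1}$, so the induced ring epimorphism $\psi\colon R[G]\to R[H]$ sends $r$ to $\sum_{i=1}^{m} c_{i}\,\phi(g_{i})$ with the $\phi(g_{i})$ pairwise distinct, whence $\psi(r)\neq 0$. The point where $R$ enters is that $R[H]$ need no longer be finite; but since the $\phi(g_{i})$ are distinct, it now suffices to preserve a single coefficient. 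Because $R$ is residually finite and $c_{1}\neq 0$, there is a ring epimorphism $\theta\colon R\to\bar{R}$ onto a finite commutative ring with identity satisfying $\theta(c_{1})\neq 0$. This induces $\bar{\psi}\colon R[H]\to\bar{R}[H]$, and $\bar{R}[H]$ is finite since both $\bar{R}$ and $H$ are finite. The composite $R[G]\to R[H]\to\bar{R}[H]$ carries $r$ to $\sum_{i}\theta(c_{i})\phi(g_{i})$, whose $\phi(g_{1})$-coefficient is $\theta(c_{1})\neq 0$; as the $\phi(g_{i})$ remain distinct, this image is nonzero. Hence every nonzero element of $R[G]$ survives in a finite ring quotient, so $R[G]$ is residually finite.

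I expect the only real subtlety — the main obstacle relative to the $\Z$ case — to be the bookkeeping of the two-stage reduction: one must separate the group elements first (using $G$) and only afterwards shrink the coefficient ring (using $R$), observing that once the group images are distinct a single surviving nonzero coefficient already guarantees $\bar{\psi}(r)\neq 0$. This is precisely why plain residual finiteness of $R$, applied to the one element $c_{1}$, suffices, and no stronger simultaneous separation property is needed. With finite generation and residual finiteness in hand, the Hopfian property follows at once from the earlier theorem.
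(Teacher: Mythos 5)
Your proof is correct and takes essentially the same approach the paper intends (the paper itself only remarks that ``essentially the same proof'' as for $\mathbb{Z}[G]$ works): finite generation is routine, and residual finiteness is established by the identical two-stage reduction --- first separating the group elements via a finite quotient of $G$, then shrinking the coefficients via a finite quotient of $R$ --- after which Hopficity follows from the earlier theorem on finitely generated residually finite rings. Your observation that it suffices to preserve a single nonzero coefficient (replacing the paper's choice of a prime $p$ dividing no coefficient in the $\mathbb{Z}$ case) is a valid and slightly cleaner way to carry out the coefficient step.
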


The final straightforward result is the Howson property for free group rings. Recall that for free groups the Howson property says that the intersection of two finitely generated subgroups of a free group is again finitely generated \cite{LS}. The same result follows directly for finitely generated sub-(group rings) of a free group ring.

\begin{theorem} Let $A$, $B$ be two finitely generated  sub-(group rings) of a finitely generated free group ring $\Z[F]$. Then $A \cap B$ is finitely generated.
\end{theorem}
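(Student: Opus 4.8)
The plan is to reduce the statement to the classical Howson property for free groups by passing to the underlying subgroups and then reassembling via a support argument. First I would invoke the sub-(group rings) structure theorem proved above: since $A$ and $B$ are sub-(group rings) of $\Z[F]$, each is itself a free group ring. Because the only units of $\Z[F]$ are $\pm f$ with $f \in F$, and the only subring of $\Z$ containing $1$ is $\Z$ itself, this forces $A = \Z[K_1]$ and $B = \Z[K_2]$ for subgroups $K_1, K_2 \le F$. Here I would absorb signs into the coefficients: even if the group of a sub-(group ring) contains units of the form $-f$, its $\Z$-span is unchanged, so one may always take $K_i = \pi(H_i)$ to be a genuine subgroup of $F$, where $\pi \colon U(\Z[F]) \to F$ forgets the sign.

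Next I would record that, for $A = \Z[K_1]$, finite generation as a ring is equivalent to finite generation of $K_1$ as a group. Indeed, if $K_1 = \langle g_1, \dots, g_r \rangle$ then $A$ is generated as a ring by $g_1, g_1^{-1}, \dots, g_r, g_r^{-1}$; conversely, the support of any element of the ring generated by a finite set $T \subseteq \Z[K_1]$ lies in the submonoid of $F$ generated by $\bigcup_{t \in T} \mathrm{supp}(t)$, and since every element of $K_1$ occurs there, $K_1$ is generated as a group by the finite set $\bigcup_{t \in T} \mathrm{supp}(t)$. Thus $K_1$ and $K_2$ are finitely generated subgroups of $F$.

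The core of the argument is the support identity $\Z[K_1] \cap \Z[K_2] = \Z[K_1 \cap K_2]$. Since $F$ is a $\Z$-basis of the free $\Z$-module $\Z[F]$, an element $\sum_f n_f f$ lies in $\Z[K_1]$ precisely when its support is contained in $K_1$, and likewise for $K_2$; hence it lies in both exactly when its support is contained in $K_1 \cap K_2$, which is the definition of lying in $\Z[K_1 \cap K_2]$. Finally I would apply the Howson property for the free group $F$: the intersection of the two finitely generated subgroups $K_1$ and $K_2$ is again finitely generated, so $\Z[K_1 \cap K_2]$ is a finitely generated group ring, and therefore $A \cap B$ is finitely generated.

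The only genuine input beyond bookkeeping is the classical Howson theorem for free groups; the main obstacle in the write-up is the careful translation between finite generation of the group ring as a ring and finite generation of its underlying group, together with the sign-absorption needed to realise the underlying groups honestly inside $F$ rather than inside $\{\pm 1\} \times F$. Everything else reduces to the elementary observation that intersections of these ``monomial'' submodules are governed by the intersection of their supporting bases.
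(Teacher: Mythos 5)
Your proof is correct and follows essentially the same route as the paper's: identify $A = \Z[K_1]$ and $B = \Z[K_2]$ with $K_1, K_2$ finitely generated subgroups of $F$, and reduce to the classical Howson property of $F$. The paper's version is far terser --- it leaves implicit the sign-absorption, the support identity $\Z[K_1] \cap \Z[K_2] = \Z[K_1 \cap K_2]$, and the equivalence between finite generation of $\Z[K]$ as a ring and of $K$ as a group, all of which you spell out correctly.
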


\begin{proof} Since $A$ and $B$ are finitely generated sub-(group rings) then $A = \Z[G]$ and $B = \Z[H]$ where $G$ and $H$ are finitely generated subgroups of the free group $F$. The result follows from the Howson property in $F$.
\end{proof} 

The proof generalizes  to show that if a group $G$ satisfies the Howson property as a group then the integral group ring $\Z[G]$    satisfies the Howson property as a group ring.

\section{Axiomatics for the Universal Theory of Free Group Rings}

A group $G$ is an $H$-group if it contains as a subgroup an isomorphic copy of $H$. Further if $L_i$ is one of our three main languages, (groups, rings and group rings), and $G$ is an appropriate model of $L_i$ then $L_i[G]$-structures are those models which include the elements of $G$ as constants. For the next we refer to Chang and Keisler \cite{CK}. We need to define a quasivariety.

Recall that $L_i$ for $i = 0,1,2$ are appropriate logical languages for groups ($L_0$),rings ($L_1$) and group rings ($L_2$).  A \textbf{quasi-identity }of $L_{i}$ is a universal sentence of the form 
$$
\forall \overline{x}\left( \bigwedge\limits_{i}(S_{i}(\overline{x})=s_{i}(%
\overline{x}))\rightarrow \left( T(\overline{x})=t(\overline{x})\right)
\right)
$$
where $\overline{x}$ is a tuple of distinct variables and $S_{i}(\overline{x}%
),s_{i}(\overline{x}),$ $T(\overline{x})$ and $t(\overline{x})$ are terms of 
$L_{i}$ involving at most the variables in $\overline{x}$.

A \textbf{%
quasivariety }of groups is the model class of a set of quasi-identities of $%
L_{0}$ together with the group axioms.

A formula $\phi$ of a language $L$ is a {\bf basic Horn formula} if and only if $\phi$ is a disjunction of formulas $\phi_i$, $\phi = \phi_1 \vee \cdots \vee \phi_n$ where at most one of the formulas $\phi_i$ is an atomic formula, the rest being the negation of atomic formulas. Then a basic Horn formula is a {\bf strict basic Horn formula} if and only if some $\phi_i$ is an atomic formula.  A {\bf Horn formula} is a conjunction of basic Horn formulas. Then a universal sentence $\forall \overline{x}\phi(\overline{x})$ is a {\bf strict universal Horn sentence} provided $\phi(\overline{x})$ is a strict basic Horn formula. In the case that $L$ is an algebra, that is has no relational symbols, then a strict universal Horn sentence in $L$ is up to logical equivalence a quasi-identity of $L$. For a model $A$ of a language its {\bf diagram} is the set of atomic and negated atomic sentences of $L[A]$ true in $A$. Hence the diagram of a group $G$ (a group ring $R[G]$) is the set of atomic and negated atomic sentences of $L_0[G]$ ($L_2(R[G]$) true in $G$ (in $R[G]$)

Finally we call a group ring which is universally equivalent with respect to the language $L_2$ to the integral group ring $\Z[F]$ where $F$ is a non-abelian free group a {\bf universally free group ring}.

Myasnikov and Remeslennikov \cite{MR} proved the following. A group is CSA or conjugately separated abelian if maximal abelian subgroups are malnormal. A subgroup $M$ of a group $G$ is malnormal if $g^{-1}Mg \cap M \ne \{1\}$ implies that $g \in M$. The CSA property implies CT.

\begin{theorem} Let $G$ be a non-abelian CSA group which is equationally Noetherian. Then the universal theory of $G$ with respect to $L_0[G]$ is axiomatizbale by the set $Q$ of quasi-identities of $L_0[G]$ true in $G$ together with CT when the models are restricted to be $G$-groups.
\end{theorem}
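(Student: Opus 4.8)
The plan is to establish the two inclusions between the class of $G$-groups satisfying $Q\cup\{\mathrm{CT}\}$ and the class satisfying $Th_\forall(G)$. One inclusion is immediate: every member of $Q$ is by definition a universal sentence true in $G$, and CT is true in $G$ because $G$ is CSA and the CSA property implies CT. Hence $Q\cup\{\mathrm{CT}\}\subseteq Th_\forall(G)$, so any $G$-group satisfying $Th_\forall(G)$ automatically satisfies the proposed axioms. (Note that the positive and negative diagram of $G$ need not be listed separately, since being a $G$-group already forces an isomorphic copy of $G$, and hence its diagram, to sit inside the model.) The entire content therefore lies in the converse: if $H$ is a $G$-group with $H\models Q$ and $H\models\mathrm{CT}$, then $H\models\Phi$ for every universal sentence $\Phi$ of $L_0[G]$ true in $G$.

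For this I would first normalize $\Phi$. Putting its quantifier-free matrix in conjunctive normal form, $\Phi$ becomes a conjunction of clauses $\forall\overline{x}\,(\bigwedge_i(u_i(\overline{x})=1)\to\bigvee_{j=1}^m(v_j(\overline{x})=1))$, and it suffices to treat a single such clause. Since a clause involves only finitely many variables, I may replace $H$ by the $G$-subgroup generated by $G$ together with any purported counterexample tuple; this is again a finitely generated $G$-group satisfying $Q$ and CT, so I may assume $H$ is finitely generated over $G$. The quasi-identity case $m=1$ is trivial, since such a sentence already lies in $Q$ and hence holds in $H$; the difficulty is the genuine disjunction $m\ge2$, and eliminating it is the heart of the matter.

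The engine for the conclusion is a discrimination lemma: if $G$ discriminates $H$ by $L_0[G]$-homomorphisms $H\to G$ (equivalently, $H$ is fully residually $G$ over the constants), then $H\models Th_\forall(G)$. Indeed, a counterexample tuple $\overline{a}$ in $H$ to the clause above satisfies $u_i(\overline{a})=1$ for all $i$ and the finitely many inequalities $v_j(\overline{a})\ne1$. A single $G$-homomorphism $\varphi$ separating all the pairs $(v_j(\overline{a}),1)$ simultaneously pushes $\overline{a}$ forward to a tuple of $G$ that still satisfies every equational hypothesis (homomorphisms preserve equalities and fix the constants) and still violates every disjunct, contradicting the truth of $\Phi$ in $G$. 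The key point is that this argument needs \emph{simultaneous} separation of finitely many inequalities, i.e. discrimination, not merely residual separation one inequality at a time. So the problem reduces to showing that a finitely generated $G$-group $H$ with $H\models Q\cup\{\mathrm{CT}\}$ is discriminated by $G$.

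This last step is the main obstacle, and it is where both hypotheses on $G$ are used, in the spirit of the residually-free theorem quoted above. I would argue in two stages. First, $H\models Q$ places $H$ in the quasivariety generated by $G$; by Mal'cev's description of that quasivariety as substructures of products of ultrapowers of $G$, this gives that $H$ is residually $G$ (up to elementary equivalence with $G$, which suffices for universal sentences), i.e. nontrivial elements are separated one at a time. Second, I would upgrade ``residually $G$'' to ``fully residually $G$'' — that is, to discrimination — using commutative transitivity exactly as in the Gaglione--Spellman/Remeslennikov theorem, now relativized over $G$: because $G$ is CSA its centralizers are malnormal abelian, and CT is precisely what allows the finitely many single-element separating homomorphisms to be amalgamated into one simultaneous separating homomorphism. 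Equational Noetherianity enters to supply the finiteness needed for this amalgamation (every relevant system of equations is equivalent to a finite subsystem, so the Zariski topology on $G^n$ is Noetherian and the shortening/merging process terminates). I expect the delicate technical point to be verifying that CT, through the malnormal abelian centralizer structure of the CSA group $G$, genuinely merges several residual homomorphisms into a single discriminating one — that is, that $H$ is fully residually $G$ and not merely residually $G$ — since this is the exact place where the disjunction $m\ge2$ is finally eliminated.
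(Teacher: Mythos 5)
First, a point of reference: the paper does not prove this statement at all --- it is quoted as a theorem of Myasnikov and Remeslennikov \cite{MR} --- so there is no internal proof to compare against. The closest relative inside the paper is the proof of Theorem \ref{thm45} (the group-ring analog), which takes a genuinely different route: there, disjunctions $\vee_j (w_j=1)$ are eliminated not by discrimination but by the zero-divisor identity $(1-w_1)\cdots(1-w_q)=0$, and universal equivalence is obtained by embedding coefficient rings and groups into ultrapowers. Your skeleton --- soundness of $Q\cup\{\mathrm{CT}\}$, reduction to finitely generated $G$-groups, the discrimination lemma, and the two-stage plan ``$Q$ gives one-at-a-time separation, CT upgrades separation to discrimination'' --- is indeed the outline of the actual Myasnikov--Remeslennikov argument, and the easy direction and the discrimination lemma are carried out correctly.

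However, there is a genuine gap exactly at the step you yourself flag: you never prove that a finitely generated $G$-group which is $G$-separated by $G$ and commutative transitive is $G$-discriminated by $G$. That implication is the entire mathematical content of the theorem (everything else in your write-up is routine model theory), and saying that ``CT is precisely what allows the finitely many single-element separating homomorphisms to be amalgamated into one'' is a restatement of the desired conclusion, not an argument; the known proofs require a real induction on the number of elements to be separated (B.~Baumslag's commutative-transitivity argument in the free case, or the algebraic-geometry-over-groups machinery of \cite{MR}: radicals, coordinate groups, irreducibility), none of which appears. There is also a secondary soft spot in your Stage 1: Mal'cev's theorem only embeds $H$ into a \emph{product of ultrapowers} of $G$, so the separating homomorphisms land in ultrapowers of $G$, not in $G$. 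Descending them to $G$-homomorphisms $H\to G$ is precisely where equational Noetherianity, finite generation, and the {\L}o{\'s} theorem must be combined (replace the possibly infinite defining relation system of $H$ by an equivalent finite subsystem, then pull an ultrapower solution of that finite system plus one inequation down to $G$); your parenthetical ``up to elementary equivalence with $G$, which suffices for universal sentences'' papers over this, and it matters, because your subsequent CT-amalgamation stage needs separating homomorphisms with a definite target $G$, which you have not produced.
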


It was subsequently shown by Fine, Gaglione and Spellman \cite{FGS} that the equationally Noetherian condition is not necessary and hence we have the following.

\begin{theorem}Let $F$ be a non-abelian free group. Then any $F$-group $H$ which is a model of the set $Q$ of quasi-identities of $L_0[F]$ true in $F$ together with CT is already a model of the universal theory of $F$ with respect to $L_0[F]$.
\end{theorem}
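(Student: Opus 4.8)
The plan is to deduce the statement from the preceding Myasnikov--Remeslennikov theorem together with the Gaglione--Spellman/Remeslennikov theorem of Section~3, working throughout in the relative ($F$-group) setting so that the constants of $L_0[F]$ are respected. A non-abelian free group $F$ is CSA, so the only hypothesis of the Myasnikov--Remeslennikov theorem that is not immediate for $F$ is equational Noetherianity; since free groups are in fact equationally Noetherian, that theorem already applies to $F$ and yields the claim directly. The point of \cite{FGS}, which I would reproduce only at the level of a sketch, is that this finiteness input can be traded for commutative transitivity, so the conclusion survives even where equational Noetherianity is unavailable.

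First I would localize. A universal sentence $\sigma$ of $L_0[F]$ involves only finitely many constants and finitely many variables, so if $\sigma$ holds in $F$ but fails in $H$ it fails on a finite tuple $\overline h$; setting $K=\langle F,\overline h\rangle$, the sentence $\sigma$ already fails in the finitely generated $F$-group $K$. Because quasi-identities and commutative transitivity are universal, both descend from $H$ to the subgroup $K$, so $K\models Q$ and $K$ is commutative transitive. Since $F\le K$, every universal sentence of $L_0[F]$ true in $K$ is true in $F$, so it suffices to prove the reverse inclusion: that each such $K$ satisfies $Th_\forall(F)$ with respect to $L_0[F]$, equivalently that $K$ is universally free relative to $F$.

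Next I would extract relative residual freeness from $Q$. Present $K$ as a quotient $(F * F_n)/N$ of the free $F$-group on $x_1,\dots,x_n$, and let $\Sigma(\overline x)$ be the defining relations (words in the $x_i$ with constants from $F$). An $F$-homomorphism $K\to F$ is exactly an assignment $x_i\mapsto w_i\in F$ satisfying $\Sigma$. If some $1\ne g=g(\overline x)\in K$ were annihilated by every such homomorphism, then $\forall\overline x\,(\bigwedge\Sigma(\overline x)\to g(\overline x)=1)$ would hold in $F$; this is a quasi-identity of $L_0[F]$, hence lies in $Q$, yet it fails in $K$ on the tuple $\overline x$, contradicting $K\models Q$. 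Thus $K$ is residually $F$ among $F$-groups. The only gap is that $\Sigma$ may be infinite, so the premise must be replaced by a finite subconjunction; for free $F$ this is immediate from equational Noetherianity, whereas \cite{FGS} instead invokes Maltsev's description of the quasivariety generated by $F$ as $\mathrm{ISP}P_U(F)$ to embed $K$ into a product of ultrapowers of $F$.

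Finally I would promote residual freeness to the full statement. With $K$ residually $F$ and commutative transitive, the Gaglione--Spellman/Remeslennikov theorem \cite{GS,Re} gives that $K$ is fully residually $F$, i.e.\ discriminated by $F$ among $F$-groups; enumerating the nontrivial elements of the countable group $K$ and amalgamating the separating $F$-homomorphisms over a suitable ultrafilter yields an $F$-embedding of $K$ into an ultrapower ${}^*F$ of $F$. Since ${}^*F\equiv F$ and universal sentences pass to the substructure $K$, we get $Th_\forall(F)\subseteq Th_\forall(K)$ with respect to $L_0[F]$, completing the reduction. The main obstacle is exactly the finiteness step: converting the possibly infinite relation system $\Sigma$ of $K$ into a single quasi-identity. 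Equational Noetherianity of $F$ dispatches it at once, and this is the honest route for free $F$; the substance of \cite{FGS} is to replace that hypothesis by commutative transitivity, using the product-of-ultrapowers embedding together with CT to collapse the several ultrapower coordinates to one.
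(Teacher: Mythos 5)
Your proposal is correct and takes essentially the same route as the paper: the paper gives no internal proof of this statement at all, obtaining it as an immediate consequence of the preceding Myasnikov--Remeslennikov theorem together with the citation to \cite{FGS} that the equationally Noetherian hypothesis may be dropped. Your only deviation is to discharge that hypothesis instead by invoking the (true, though unstated in the paper) fact that free groups are equationally Noetherian, which is an equally valid one-line derivation, and your supplementary sketch of the residually-$F$/CT/ultrapower machinery is consistent with the arguments underlying \cite{MR} and \cite{FGS}.
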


Our result is a direct analog of this result in the context of group rings containing the free group ring $\Z[F]$. Recall that if $R[G]$ is a group ring and $x \in R[G]$ then $\Gamma(x)$ means that $x \in G$ while ${\cal{P}}(x)$ indicates the $x \in R$. We also make the convention that $G = \Gamma(R[G])$ and $R = {\cal{P}}(R[G])$.  In the language $L_2[\Z[F]]$ commutative transitivity is expressed as
$$RCT\colon  \forall x,y,z((\sim {\cal{P}}(y) \wedge (xy=yx) \wedge (yz = zy)) \rightarrow (xz=zx)).$$

It is known that that the elementary theory of a CT group $G$ is axiomatized by the set $H(G)$ of Horn sentences true in $G$ together with CT (see \cite{MR}).  Further Myasnikov and Remeslennikov have proved \cite{MR} the universal theory of a CSA group $G$ is given by the diagram of $G$, the strict universal Horn sentences of $L_0[G]$ true in $G$ and commutative transitivity. Myasnikov and Remeslennikov required $G$ to be equationally Noetherian but it was shown in \cite{FGS} that equationally Noetherian is superfluous. In light of this result and the examples of universal sentences in free group rings the hazy conjecture was made that the universal theory of a free group rings consisted of the universal theory of free groups appropriately modified to group ring theory and vice versa. Our main result is the following which shows this to be true in terms of axiom systems. We obtain a result similar to the theorem on elementary theory. 

\begin{theorem}\label{thm43} Let $G$ be a group and suppose that the group ring $R[G]$ satisfies the diagram of the free group ring $\Z[F]$, the strict universal Horn sentences of $L_2[\Z[F]]$ true in $\Z[F]$ and ring commutative transitivity. Then $R[G] \equiv_\forall \Z[F]$ with respect to $L_2[\Z[F]]$.
\end{theorem}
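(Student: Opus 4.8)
The plan is to prove this as a direct analog of Theorem 4.2 (the Myasnikov–Remeslennikov/Fine–Gaglione–Spellman result for $F$-groups), transporting that group-theoretic argument into the group-ring setting via the structural correspondence $G = \Gamma(R[G])$ and $R = \mathcal{P}(R[G])$. The hypotheses are chosen precisely so that this transport goes through: satisfying the diagram of $\Z[F]$ means $R[G]$ contains an isomorphic copy of $\Z[F]$ as a sub-(group ring) (so $R[G]$ is a "$\Z[F]$-structure" in the sense defined for $L_2$), and by the Nielsen–Schreier analog (Theorem 3.5) any such $R[G]$ genuinely has the form of a group ring with $\Gamma(R[G])$ a free group extending $F$. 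Since universal and existential theories are interdefinable by negation, it suffices to show every existential sentence of $L_2[\Z[F]]$ true in $\Z[F]$ is true in $R[G]$; equivalently, every quasi-identity-respecting consequence is preserved.

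First I would reduce the claim to a discrimination/embedding statement. The standard mechanism (as in \cite{MR,FGS}) is: to show $\Z[F] \equiv_\forall R[G]$ it is enough to show that $\Z[F]$ is \emph{discriminated} by $R[G]$ over the constants of $\Z[F]$, i.e. for finitely many pairs of distinct elements of $\Z[F]$ there is an $L_2[\Z[F]]$-homomorphism into $R[G]$ separating them; combined with the fact that $R[G]$ satisfies the diagram of $\Z[F]$ and the strict universal Horn sentences, this forces agreement of universal theories. The role of the strict universal Horn sentences (quasi-identities) is exactly to guarantee that the positive atomic information transfers, while the diagram handles the negated atomic information. So the second step is to set up the quasi-identity machinery: show that a model of the quasi-identities of $L_2[\Z[F]]$ true in $\Z[F]$, together with $RCT$, behaves toward $\Z[F]$ the way an $F$-group satisfying $Q$ and $CT$ behaves toward $F$ in Theorem 4.2.

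The key translation step is to deploy $RCT$ to recover commutative transitivity on the group part $\Gamma(R[G])$. The definition $RCT$ quantifies over all $x,y,z$ with $\sim\mathcal{P}(y)$; restricting to the case where $x,y,z$ all satisfy $\Gamma$ (i.e. lie in $G$) recovers ordinary group commutative transitivity for $G$, since a nontrivial group element is not in $R$ under the standing conventions. Thus $G = \Gamma(R[G])$ is a commutative-transitive $F$-group, and because it satisfies the group-theoretic quasi-identities inherited from the $L_2$ quasi-identities true in $\Z[F]$, Theorem 4.2 applies to give $G \equiv_\forall F$ with respect to $L_0[F]$. In parallel, the ring part $R = \mathcal{P}(R[G])$ must be shown to be universally equivalent to $\Z$ (using that $\Z$ has, up to universal equivalence, a rigid enough structure, and that $R$ is forced to have characteristic zero and to be $\omega$-residually-$\Z$ by the transferred Horn sentences). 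The final step assembles these: an arbitrary universal $L_2[\Z[F]]$ sentence true in $\Z[F]$ decomposes, via the interplay of $\Gamma$ and $\mathcal{P}$ and the normal-form of elements $\sum n_i g_i$, into group-theoretic and ring-theoretic universal content together with the compatibility encoded by the Horn sentences; each piece is preserved in $R[G]$ by the three hypotheses.

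The hard part will be the coordination step rather than either factor in isolation. Proving $G \equiv_\forall F$ and $R \equiv_\forall \Z$ separately is a faithful application of known theorems, but a universal sentence in $L_2$ can mix the group and ring predicates in a single quantified formula (for instance, asserting a relation among $\sum n_i g_i$ expressions whose truth couples the coefficients with the group elements), and it is not \emph{a priori} clear that universal equivalence of the two factors plus $RCT$ suffices to control such mixed sentences. The crux is therefore to show that the strict universal Horn sentences of $\Z[F]$, true in $R[G]$ by hypothesis, are strong enough to reduce every mixed universal sentence to a Boolean combination of the separated group and ring assertions — essentially an elimination argument showing that trivial-units behavior (guaranteed because $\Gamma(R[G])$ is orderable as a free group, so $R[G]$ has only trivial units) lets one normalize all $L_2$-terms so that the mixed sentence factors. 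I expect this normalization-and-factoring step, underwritten by the trivial-units property and the transferred Horn sentences, to be where the real work lies, with the two factorwise universal equivalences serving as the finishing inputs.
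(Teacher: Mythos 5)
Your overall skeleton --- recover group CT for $G=\Gamma(R[G])$ from $RCT$, apply the group-theoretic Theorem 4.2 to get $G\equiv_\forall F$, treat the coefficient ring separately, then recombine --- is broadly the shape of the paper's argument (the paper proves Theorem 4.3 as a corollary of Theorem 4.5, whose proof does exactly this kind of split), and your route to $G\models Th_\forall(F)$ via Theorem 4.2 is a legitimate alternative to the paper's direct translation of negated primitive sentences into strict Horn sentences using the fact that $(1-g_1)\cdots(1-g_q)=0$ in $\Z[G]$ forces some $g_j=1$ when $G$ is torsion-free. However, there are genuine gaps. First, your appeal to Theorem 3.5 is a misreading of the containment: the diagram makes $R[G]$ \emph{contain} $\Z[F]$, not sit inside it, so $G$ is merely a group containing $F$; it need not be free, and the trivial-units property you lean on is not available. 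Second, your claim that the transferred Horn sentences force $R$ to be $\omega$-residually-$\Z$ cannot be right as stated: ``no zero divisors'' is \emph{not} a strict universal Horn sentence (its clausal form has two positive atomic disjuncts), and the upgrade from residually-$\Z$ (which the Horn sentences plus the Hilbert Basis Theorem do give for each finitely generated unital subring) to $\omega$-residually-$\Z$ requires $R$ to be an integral domain. This is precisely where the paper spends $RCT$: if $uv=0$ with $u,v\neq 0$, pick non-commuting $a,b\in F$; then $ua,vb,ub$ commute pairwise as required, $RCT$ forces $u^2ab=u^2ba$, and the strict Horn sentence $\forall x((\mathcal{P}(x)\wedge(x^2=0))\rightarrow(x=0))$ together with uniqueness of group-ring representation yields $ab=ba$, a contradiction. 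You used $RCT$ only for group CT and left nothing to make $R$ a domain.

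Third, and most seriously, the ``coordination step'' you flag as the real work is exactly what you never supply, and the mechanism you propose (syntactic normalization of mixed sentences via trivial units) rests on the false freeness claim above. The paper handles mixed sentences semantically, with ultrapowers, and no factoring of formulas is needed: from $R\models Th_\forall(\Z)$ one gets an embedding $R\hookrightarrow \Z^I/D$, hence $R[F]$ embeds into $(\Z[F])^I/D$ and $R[F]\models Th_\forall(\Z[F])$; from $G\models Th_\forall(F)$ one gets $G\hookrightarrow F^J/E$, hence $R[G]$ embeds into $(R[F])^J/E$ and $R[G]\models Th_\forall(R[F])$; the reverse inclusions of universal theories come for free from the chain of substructures $\Z[F]\subseteq R[F]\subseteq R[G]$, and transitivity of $\equiv_\forall$ finishes. (A further small point: your discrimination reduction is stated in the trivial direction --- homomorphisms from $\Z[F]$ into $R[G]$ separating points exist by inclusion; what would be needed is separation by maps going the other way, and even that is not what the argument ultimately uses.)
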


Myasnikov and Remslennikov prove more. If $G$ is any finitely generated non-abelian CSA group then the universal theory of $G$ with respect to $L_0[G]$ is axiomatizable by the diagram of $G$, the set of quasi-identities of $L_0[G]$ true in $G$ and CT. The same proof as for the free group ring $\Z[F]$ goes through to give:

\begin{theorem}\label{thm44} Let $H$ be a nonabelian torsion-free CSA group and suppose that $G$ is a torsion-free group. If the group ring $R[G]$ satisfies the diagram of the group ring $\Z[H]$ the strict universal Horn sentences of $L_2[\Z[H]]$ true in $\Z[H]$ and ring commutative transitivity then $R[G] \equiv_\forall \Z[H]$ with respect to $L_2[\Z[H]]$.
\end{theorem}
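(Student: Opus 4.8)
The plan is to establish the two inclusions $Th_\forall(R[G]) \subseteq Th_\forall(\Z[H])$ and $Th_\forall(\Z[H]) \subseteq Th_\forall(R[G])$ of universal theories with respect to $L_2[\Z[H]]$, following the argument for Theorem~\ref{thm43} with the CSA axiomatization in place of its free specialization. The first inclusion is the soft half. Since $R[G]$ satisfies the diagram of $\Z[H]$, the constants naming the elements of $\Z[H]$ are interpreted faithfully and all atomic and negated-atomic relations among them are preserved; this is exactly an embedding $\iota\colon \Z[H] \hookrightarrow R[G]$ of $L_2$-algebras compatible with the constants of $L_2[\Z[H]]$. As universal sentences persist downward to substructures, every universal sentence of $L_2[\Z[H]]$ true in $R[G]$ is true in the copy $\iota(\Z[H])$, hence in $\Z[H]$.

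For the reverse inclusion I would show that $\Z[H]$ discriminates $R[G]$ in the sense of Section~3, and then invoke the standard consequence that a structure discriminated by $B$ embeds in an ultrapower of $B$, so that $Th_\forall(\Z[H]) \subseteq Th_\forall(R[G])$. The discrimination is assembled from the two coordinate structures. By the diagram, the constants of $\Z[H]$ lying in $H$ map to a copy of $H$ inside $G = \Gamma(R[G])$, so $G$ is an $H$-group, and the integer constants force $\Z \le R = \mathcal{P}(R[G])$. Relativizing the strict universal Horn sentences of $L_2[\Z[H]]$ to $\Gamma$ gives the quasi-identities of $L_0[H]$ true in $H$, now satisfied by $G$, while RCT specializes to group commutative transitivity on $G$: every nontrivial element of the embedded $H$ satisfies $\sim\mathcal{P}$, since $H \cap \Z = \{1\}$ in $\Z[H]$, so the hypothesis $\sim\mathcal{P}(y)$ of RCT is met for the relevant $y$. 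As $H$ is nonabelian torsion-free CSA, the Myasnikov--Remeslennikov axiomatization quoted above (with the equationally Noetherian hypothesis removed as in \cite{FGS}) applies to the $H$-group $G$ and makes it a model of $Th_\forall(H)$; for CSA $H$ this is equivalent to $G$ being fully residually $H$, so $H$ discriminates $G$. In parallel, the Horn sentences expressing residual-$\Z$-ness force $R$ to be $\omega$-residually $\Z$, so $\Z$ discriminates $R$.

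The main obstacle is the lifting step: manufacturing, for each finite family of unequal pairs $(u_k,v_k)$ in $R[G]$, a single group-ring homomorphism $\Phi\colon R[G] \to \Z[H]$ separating them. I would write the finitely many differences $u_k - v_k$ in $L_2$-normal form $\sum_i r_i g_i$ with $r_i \in R$ and distinct $g_i \in G$, choose by full residual $H$-ness of $G$ a homomorphism $\varphi\colon G \to H$ keeping distinct all the finitely many group elements in the supports (equivalently, not annihilating the quotients $g_i g_j^{-1}$), and then by $\omega$-residual-$\Z$-ness of $R$ a ring homomorphism $\psi\colon R \to \Z$ not killing the surviving coefficients. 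The induced map $\Phi\colon \sum_i r_i g_i \mapsto \sum_i \psi(r_i)\varphi(g_i)$ is then the desired separating homomorphism, provided no cancellation among the images collapses a nonzero difference to $0$. Controlling this interaction between supports and coefficients is the only genuinely technical point; it is handled exactly as in the residual finiteness theorem of Section~3, using torsion-freeness of $G$ and $H$ to keep distinct supports distinct and to prevent the coefficients from coalescing.

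Finally, having shown that $\Z[H]$ discriminates $R[G]$, the resulting embedding of $R[G]$ into an ultrapower of $\Z[H]$ together with \L{}o\'s's theorem yields $Th_\forall(\Z[H]) \subseteq Th_\forall(R[G])$. Combined with the first inclusion this gives $R[G] \equiv_\forall \Z[H]$ with respect to $L_2[\Z[H]]$, as required; the sole deviation from the free-group case is the use of the CSA axiomatization, which is why $H$ is taken nonabelian torsion-free CSA and $G$ torsion-free.
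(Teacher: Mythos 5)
Your first inclusion (universal sentences true in $R[G]$ descend to the substructure $\Z[H]$ provided by the diagram) is correct and is exactly how the paper handles that direction. The gap is in the reverse inclusion: you claim the hypotheses force $\Z$ to discriminate $R$ (``$\omega$-residually $\Z$'') and force $G$ to be fully residually $H$, and you then assemble a single separating homomorphism $\Phi\colon R[G]\rightarrow \Z[H]$. Neither discrimination claim follows from the hypotheses, and in general both are false. What the strict universal Horn sentences actually yield --- and all that the paper's own argument extracts --- is \emph{local} information: every \emph{finitely generated} unital subring $R_0\leq R$ is $\omega$-residually $\Z$ (finite generation is essential there; the Hilbert Basis Theorem is invoked precisely so that the kernel of $\Z[x_1,\dots,x_n]\rightarrow R_0$ is finitely generated and the premise of the relevant quasi-identity is a finite conjunction), and $G$ is a model of $Th_\forall(H)$. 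Such facts give embeddings into ultrapowers, not homomorphisms into $\Z$ or $H$. Concretely, take $H=F$ free (so the hypotheses are satisfiable, $\Z[F]$ being CT by \cite{M}), $G=H$, and $R=\Z^{\N}/D$ a nonprincipal ultrapower. Then $R[G]=(\Z^{\N}/D)[H]$ embeds into $(\Z[H])^{\N}/D$, so it satisfies the diagram of $\Z[H]$, every strict universal Horn sentence true in $\Z[H]$, and RCT --- all hypotheses of the theorem --- yet $R$ is not even residually $\Z$: the element $\alpha=[(n!)_n]$ is nonzero but divisible by every standard integer, so every unital ring homomorphism $\Z^{\N}/D\rightarrow\Z$ kills $\alpha$, and no separating homomorphism $\Phi$ as in your lifting step can exist. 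Similarly, ``model of $Th_\forall(H)$ iff fully residually $H$'' is a theorem about \emph{finitely generated} groups; an ultrapower $H^J/E$ models $Th_\forall(H)$ without being discriminated by $H$, and $G$ here is not assumed finitely generated.

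The paper's proof avoids maps into $\Z[H]$ altogether and runs your two coordinate reductions in series rather than merging them: local $\omega$-residual-$\Z$-ness makes $R$ a model of $Th_\forall(\Z)$, so $R$ embeds in an ultrapower $\Z^I/D$, whence $R[H]$ models $Th_\forall(\Z[H])$; separately, the Horn-sentence translation (using torsion-freeness and the fact that $(1-w_1)\cdots(1-w_q)=0$ forces some $w_j=1$) shows $G\models Th_\forall(H)$, so $G$ embeds in an ultrapower $H^J/E$, whence $R[G]$ embeds in $(R[H])^J/E$ and models $Th_\forall(R[H])$; the reverse containments come from the substructure relations $\Z[H]\subseteq R[H]$ and $R[H]\subseteq R[G]$, and transitivity gives $\Z[H]\equiv_\forall R[H]\equiv_\forall R[G]$. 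Your outline becomes correct if you replace ``discriminates'' by ``embeds in an ultrapower of'' throughout and chain the ring and group steps in this way. Note also that RCT is needed not only to get CT on $G$: the paper uses it (via the $ua$, $vb$, $ub$ trick together with the Horn sentence $\forall x\,((\mathcal{P}(x)\wedge(x^2=0))\rightarrow(x=0))$) to prove that $R$ is an integral domain, which is what upgrades residually-$\Z$ to $\omega$-residually-$\Z$ for the finitely generated subrings; your sketch never supplies that step.
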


The proofs of the results follow as corollaries of the proof of the next theorem which is the group ring analog of the Gaglione-Spellman-Remeslennikov result for groups.

\begin{theorem}\label{thm45} Let $F$ be a non-abelian free group. Let $R[H]$ be a group ring containing $\Z[F]$ which is a model of the set $S$ of strict universal Horn sentences of $L_2[\Z[F]]$ true in $\Z[F]$. Then the following are equivalent: 
\begin{enumerate}[(1)]
\item $R[H]$ is CT
\item $R[H]$ is a model of the universal theory of $\Z[F]$ with respect to $L_2[\Z[F]]$
\end{enumerate}
\end{theorem}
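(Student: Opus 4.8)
The plan is to treat the two implications separately, with $(2)\Rightarrow(1)$ routine and $(1)\Rightarrow(2)$ carrying all the weight. For $(2)\Rightarrow(1)$ it suffices to observe that $RCT$ is itself a universal sentence of $L_2[\Z[F]]$ true in $\Z[F]$: since $F$ is non-abelian free, $\Z[F]$ is a domain whose centre is the scalar subring $\Z$, and the centraliser of any non-scalar element is commutative (as recorded in \cite{FGRS1}). Hence $RCT\in Th_\forall(\Z[F])$, and any model of the universal theory of $\Z[F]$ must satisfy it, so $R[H]$ is $CT$.

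For $(1)\Rightarrow(2)$ the strategy is to show that $\Z[F]$ \emph{discriminates} $R[H]$. This suffices because discrimination transfers universal sentences: putting a would-be counterexample to some $\forall\overline{x}\,\phi(\overline{x})\in Th_\forall(\Z[F])$ into conjunctive normal form isolates finitely many inequations valid in $R[H]$, and a discriminating homomorphism transports them into $\Z[F]$, producing a counterexample there, which is absurd. So the whole problem reduces to the following: given finitely many non-zero elements $r_1,\dots,r_m\in R[H]$, produce a single group-ring homomorphism $\varphi\colon R[H]\to\Z[F]$ with $\varphi(r_k)\neq 0$ for all $k$.

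The construction of $\varphi$ would split into a group part and a coefficient part. Writing each $r_k=\sum_j c_{k,j}h_{k,j}$ in reduced form (distinct $h_{k,j}\in H$, non-zero $c_{k,j}\in R$), I would first separate supports: the finitely many $h_{k,j}h_{k,j'}^{-1}\neq 1$ are non-trivial in $H$, so if $F$ discriminates $H$ there is a group homomorphism $\theta\colon H\to F$ injective on every support. Next I would preserve coefficients: the finitely many $c_{k,j}\neq 0$ are non-zero in $R$, so if $\Z$ discriminates $R$ there is a ring homomorphism $\psi\colon R\to\Z$ with $\psi(c_{k,j})\neq 0$. The induced map $\varphi=\psi\ast\theta\colon R[H]\to\Z[F]$, $\sum c_jh_j\mapsto\sum\psi(c_j)\theta(h_j)$, is then a group-ring homomorphism, and because $\theta$ is injective on the support of $r_k$ no cancellation can occur, so $\varphi(r_k)=\sum_j\psi(c_{k,j})\theta(h_{k,j})\neq 0$. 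Thus $\Z[F]$ discriminates $R[H]$ and $(2)$ follows.

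It therefore remains to supply the two discrimination statements from the hypotheses, and this is where the real work lies. For the group: $RCT$ forces $H$ to be commutative transitive, since a non-trivial element of $H$ is non-scalar, and the strict universal Horn sentences $S$ encode, via the predicate $\Gamma$, all quasi-identities of $L_0[F]$ true in $F$; as $H$ is an $F$-group, the Gaglione--Spellman--Remeslennikov theorem \cite{GS,Re} together with \cite{FGS,MR} yields that $H$ is fully residually free, i.e. discriminated by $F$ (passing, if necessary, to the finitely generated subgroup carrying the finitely many supports, where modelling $Th_\forall(F)$ and finite generation force full residual freeness). The coefficient part is the delicate new ingredient: I would show that a commutative $\Z$-ring $R$ modelling the ring fragment of $S$ is locally $\omega$-residually $\Z$, so that each finitely generated subring carrying the $c_{k,j}$ is discriminated by $\Z$. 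I expect this ring-theoretic discrimination to be the main obstacle, since quasi-identities alone do not exclude zero divisors -- for instance $\Z\times\Z$ lies in the quasivariety generated by $\Z$ yet is not discriminated by $\Z$, as any homomorphism to $\Z$ kills one of the orthogonal idempotents. The argument must therefore genuinely use that $S$ consists of the strict universal Horn sentences of the \emph{group ring} $\Z[F]$, whose interaction of the predicates $\Gamma$ and ${\cal P}$ under $RCT$ is what rules out such pathological coefficient rings.
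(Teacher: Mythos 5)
Your reduction of $(1)\Rightarrow(2)$ to the claim that $\Z[F]$ \emph{discriminates} $R[H]$ asks for strictly more than the theorem asserts, and under the stated hypotheses it is false. Discrimination and universal equivalence coincide only for finitely generated structures, and $R[H]$ here is arbitrary. Concretely, take $R=\Z$ and $H=F^J/E$ a non-principal ultrapower of $F$: the map $\sum_k n_k[h^{(k)}]\mapsto\bigl[\sum_k n_k h^{(k)}_j\bigr]_j$ embeds $\Z[H]$ over $\Z[F]$ into the ultrapower $(\Z[F])^J/E$, so $\Z[H]$ is a group ring satisfying the diagram, the set $S$, $RCT$, and even conclusion (2). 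Yet $H$ contains $h\neq 1$ (e.g.\ $h=[(a^{n!})_n]$) possessing an $n$-th root for every $n$; any ring homomorphism $\varphi\colon\Z[H]\to\Z[F]$ carries units to units, hence $\varphi(h)=\pm f$ with $f\in F$, and divisibility by every even $n$ forces $\varphi(h)=1$, since in $\{\pm 1\}\times F$ only the identity is divisible by every $n$. Thus $\varphi(h-1)=0$ for \emph{every} $\varphi$ although $h-1\neq 0$: $\Z[H]$ is not even residually $\Z[F]$, let alone discriminated by it. The same example refutes your assertion that $H$ is fully residually free: you derived it from the Gaglione--Spellman--Remeslennikov theorem \cite{GS,Re}, but that theorem \emph{presupposes} $H$ residually free, which is not among your hypotheses and fails here. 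What $RCT$ plus the relativized quasi-identities actually yield (via \cite{MR,FGS}, Theorem 4.2 of the paper) is only $H\models Th_\forall(F)$ with respect to $L_0[F]$, which is strictly weaker than discrimination precisely when $H$ is not finitely generated.

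Your parenthetical fallback (pass to the finitely generated subgroup and subring carrying the supports and coefficients) is the right instinct, and with it your separation-of-supports construction $\varphi=\psi\ast\theta$ would be sound; but then the whole argument must be rebuilt around finitely generated substructures, and $\theta$ must additionally fix the finitely many constants from $F$ occurring in the sentence being transferred (the language is $L_2[\Z[F]]$), i.e.\ you need $F$-discrimination of finitely generated $F$-models of $Th_\forall(F)$ -- the restricted limit group theory of \cite{MR} -- not bare full residual freeness. More seriously, the coefficient half, which you yourself flag as ``the main obstacle,'' is exactly the content you need and do not supply. The paper proves it in two steps: (i) every finitely generated unital subring $R_0\leq R$ is residually $\Z$, by presenting $R_0$ as a quotient of $\Z[x_1,\dots,x_n]$ (Hilbert Basis Theorem) and converting the relevant quasi-identity of $\Z$ into a ${\cal P}$-relativized strict universal Horn sentence in $S$; and (ii) $R$ is an integral domain, which is where $RCT$ genuinely enters: if $uv=0$ with $u,v\neq 0$, then for non-commuting $a,b\in F$ the elements $ua$, $vb$, $ub$ pairwise commute, $CT$ gives $u^2ab=u^2ba$, and the Horn sentence $\forall x\,(({\cal P}(x)\wedge x^2=0)\to x=0)$ together with uniqueness of representation forces $ab=ba$, a contradiction. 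Steps (i) and (ii) combine to make $R$ locally $\omega$-residually $\Z$. Note finally that the paper never proves discrimination of $R[H]$ at all: it shows $R\models Th_\forall(\Z)$ and $H\models Th_\forall(F)$, embeds $R$ and $H$ into ultrapowers $\Z^I/D$ and $F^J/E$, and chains $\Z[F]\equiv_\forall R[F]\equiv_\forall R[H]$ by transitivity -- an argument immune to the cardinality obstruction that defeats the global discrimination route.
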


\begin{proof} To prove this we need some preliminary results. First note that (2) implies (1) is trivial since $\Z[F]$ is commutative transitive by \cite{M}.  The difficulty is in the proof that (1) implies (2). To do this we first need the following proposition and it corollary.  

\begin{proposition} Let $G$ be a group and $g \in G$ with infinite order. Then $1-g$ is not a zero divisor in the integral group ring $\Z[G]$.
\end{proposition}

\begin{proof} (of proposition)  For the proof of the proposition we need the following preliminary lemma.

\begin{lemma} Let $\phi\colon \Z[G] \rightarrow \Z[G]$ be the endomorphism of the additive group of $\Z[G]$ given by $\phi(x) = gx$ for all $x \in \Z[G]$. Then $0$ is the only fixed point of $\phi$. Similarly if $\psi\colon \Z[G] \rightarrow \Z[G]$ is given by $\psi(y) = yg$ for all $y \in \Z[G]$ then $0$ is the only fixed point of $\psi$.
\end{lemma}

To prove the lemma we will give the arguments for $\phi$. The argument for $\psi$ is analogous. Suppose to deduce a contradiction that $r_1g_1 + ... + r_ng_n$ is a non-zero fixed point of $\phi$. Here $i < j$ then $g_i \ne g_j$ and $r_i \in \Z \setminus \{0\}$ for $1 \le i \le n$. Let $S = \{g_1,...,g_n\}$. Then $\phi$ must restrict to a permutation $\pi$ of $S$ with $\pi$ having finite order $N$. Then $\phi^N(g_i) = g_i$ for $i = 1,...,n$ so $g^Ng_i = g_i$ for $i = 1,...,n$.  From $g^Ng_1 = g_1$ we get that $g^N = 1$ contradicting the hypothesis that $g$ has infinite order. This shows that $0$ can be only fixed point of $\phi$. An analogous argument follows for $\psi$ completing the proof of the lemma.

We now return to the proof of the proposition, It follows that $ker(1-\phi) = \{0\}= ker(1-\psi)$. Then $0 = (1-g)x = (1 -\phi) (x)$ implies that $x = 0$, Similarly $0 = y(1-g) = 0$ implies that $y = 0$, hence $1-g$ is not a zero divisor in $\Z[G]$ and the proposition is proved.
\end{proof} 

If $g$ has finite order $n$ then $1-g$ is a zero divisor in $\Z[G]$. A straightforward induction establishes the following corollary,

\begin{corollary} Let $G$ be a torsion-free group. Then $(1-g_1)(1-g_2)  \cdots (1-g_n) = 0$  in $\Z[G]$ if and only if at least one $g_i = 1$.
\end{corollary}

Now we return to the proof of Theorem \ref{thm45} and the difficult part that (1) implies (2). Let $G$ be a torsion-free group and let $H$ be a torsion-free $G$-group. Recall that a {\bf primitive sentence} is an existential sentence whose matrix is a conjunction of atomic sentences and negations of atomic sentences. 

$H$ is a model in $L_0[G]$ of $Th_{\forall}(G)$ if and only if every primitive sentence of $L_0[G]$ true in $H$ is also true in $G$. Equivalently $H$ is a model of $Th_{\forall}(G)$ if and only if every negated primitive sentence of $L_0[G]$ true in $G$ is also true in $H$. Now consider a primitive sentence
$$\exists \overline{x} (\wedge_{i=1}^p (u_i(\overline{x}) = 1 ) \wedge \wedge_{j=1}^q (w_j(\overline{x}) \ne 1))$$ of $L_0[G]$ where $\overline{x} = (x_1,...,x_k)$ is a tuple of distinct variables. The negation of this is equivalent to
$$\forall \overline{x} (\vee_{i=1}^p (u_i(\overline{x}) \ne 1 ) \vee \vee_{j=1}^q (w_j(\overline{x}) = 1))$$
which is equivalent to
$$\forall \overline{x} (\sim (\wedge_{i=1}^p (u_i(\overline{x}) = 1 )) \vee \vee_{j=1}^q (w_j(\overline{x}) =1 ))$$
which in turn is equivalent to
$$\forall \overline{x} ((\wedge_{i=1}^p u_i(\overline{x}) = 1 ) \rightarrow \vee_{j=1}^q (w_j(\overline{x}) = 1))$$

In view of the fact that $(1-g_1) \cdots (1-g_q) = 0$ in $\Z[G]$ if and only if at least one $g_j = 1$ the above sentence of $L_0[G]$ is  equivalent to the following strict universal Horn sentence of $L_2[\Z[G]]$
$$ \forall \overline{x} ((\wedge_{\nu= 1}^k \Gamma(x_\nu) \wedge \wedge_{i=1}^p (u_i(\overline{x}) =1 )) \rightarrow ((1-w_1(\overline{x} ))\cdots (1- w_q(\overline{x})) = 0))$$
It follows that if $Z[H]$ is a model of the above strict universal Horn sentences of $L_2[\Z[G]]$ true in $\Z[H]$ then $H$ is a model of $Th_\forall(G)$.

Now let $F$ be a nonabelian free group, let $G$ be an $F$-group and let $R[G]$ be the group ring where $R$ is a commutative ring with $1 \ne 0$ and of characteristic $0$. We may view $R$ as an extension of $\Z$. Suppose that $R[G]$ is commutative transitive and is a model of the strict universal Horn sentences of $L_2[\Z[F]]$ true in $\Z[F]$. Now let $R_0$ be a finitely generated unital subring of $R$. We claim that $R_0$ is residually-$\Z$. 

To see this suppose that $r_1,...,r_n$ generate $R_0$. Then $R_0$ is a homomorphic image of the integral polynomial ring $\Z[x_1,...,x_n]$ where we get a surjective homomorphism onto $R_0$ by $x_i \mapsto r_i$ with $i=1,...,n$. By the Hilbert Basis Theorem $\Z[x_1,...,x_n]$ is Noetherian. Hence if $K = ker(\Z[x_1,...,x_n] \rightarrow R_0)$ then $K$ is finitely generated as an ideal. Suppose that $g_1(x_1,...,x_n),...,g_k(x_1,...,x_n)$ generate $K$ as an ideal. Suppose to deduce a contradiction that $s \in R_0\setminus \{0\}$ is annihilated  by every retraction $\rho\colon R_0 \rightarrow \Z$.

Letting $s = f(r_1,..,r_n)$ we have the following quasi-identity
$$ \forall x_1,....,x_n (\wedge_{i=1}^k \wedge (g_i(x_1,...,x_n) = 0) \rightarrow (f(x_1,...,x_n) = 0))$$
true in $\Z$. Thus the following strict universal Horn sentence of $L_2[\Z[G]]$ is true in $\Z[G]$;
$$ \forall x_1,...,x_n (( \wedge_{j=1}^n {\cal{P}}(x_j) \wedge \wedge_{i=1}^k( g_i(x_1,...,x_n) = 0)) \rightarrow (f(x_1,...,x_n)= 0))$$
hence it is true in $R[G]$. However that is contradicted by the substitution
$$x_i \mapsto r_i, i = 1,...,n$$
since $f(r_1,...,r_n) \ne 0$.

This contradiction shows that $R_0$ is residually-$\Z$. 

We now claim that $R$ and therefore $R_0$ also is an integral domain.  Suppose not. Let $(u,v) \in (R\setminus\{0\})^2$ with $uv = 0$. Fix $a,b \in F \subset G$ with $ab \ne ba$. Now in $R[G]$, $ua$ commutes with $vb$ which commute with $ub$. By commutative transitivity $ua$ and $ub$ commute. Therefore $u^2ab = u^2ba$ in $R[G]$. The strict universal Horn sentence of $L_2[\Z[G]]$
$$ \forall x (( {\cal{P}}(x) \wedge (x^2 = 0)) \rightarrow (x = 0))$$
is true in $\Z[F]$. Hence it holds in $R[G]$ and therefore $u^2 \ne 0$. By uniqueness of representation $ab = ba$ giving a contradiction. Therefore both $R$ and $R_0$  are integral domains. 

It follows then that $R_0$ is $\omega$-residually-$\Z$ since if $s_1,..,,s_n$ are finitely many non-zero elements of $R_0$ there is a retraction $\rho\colon  R_0 \rightarrow \Z$ which does not annihilate their product. From $\rho(s_1) \cdots \rho(s_n) = \rho(s_1 \cdots s_n) \ne 0$ we get $\rho(s_i) \ne 0$ for $i = 1,...,n$. Since $R_0$ was an arbitrary finitely generated unital subring of $R$ it follows that $R$ is locally $\omega$-residually-$\Z$. It follows that $R$ is a model of $Th_\forall(\Z)$. Hence $R$ embeds in an ultrapower $\Z^I/D$ and so $R[F]$ is a model of $Th_\forall(\Z[F])$. Since $\Z[F]$ embeds in $R[F]$ every existential sentence true in $\Z[F]$ is also true in $R[F]$.  Hence $\Z[F] \equiv_{\forall} R[F].$
Since $F$ is torsion-free, for each integer $n$ the following quasi-identity
$$\forall x (( x^n =1) \rightarrow (x=1))$$
holds in $F$. It follows that for each integer $n \ge 2$ the strict universal Horn sentence
$$\forall x ((\Gamma(x) \wedge (x^n = 1)) \rightarrow (x = 1))$$
holds in $\Z[F].$ Hence it holds in $R[G]$ and $G$ is torsion-free. We claim that $G$ is a model of $Th_\forall(F)$. Let $\Theta$ be a negated primitive sentence true in $F$. Then $\Theta$ is equivalent to  a strict universal Horn sentence
$$\forall \overline{x} ((\wedge_{\nu =1 }^k \Gamma(x_{\nu}) \wedge \wedge_{i=1}^p (u_i(\overline{x}) = 1)) \rightarrow ((1-w_1(\overline{x})) \cdots (1- w_q(\overline{x})) = 0))$$
true in $\Z[F]$. Hence it is true in $R[G]$ and since $\Z[G]$ is contained in $R[G]$ it is true in $Z[G]$ It follows that $\Theta$ is true in $G$ and thus as claimed $G$ is a model of $Th_\forall(F)$.

It follows that $G$ embeds in an ultrapower $F^J/E$ of $F$. Then $R[G]$ embeds into $(R[F])^J/E$ so $R[G]$ is a model of $Th_{\forall}(R[F])$. Since $R[F]$ embeds into $R[G]$ every existential sentence true in $R[F]$ is also true in $R[G]$. It follows that $R[F] \equiv_\forall R[G]$. From $\Z[F] \equiv_\forall R[F]$ and $R[F] \equiv_\forall R[G]$ we get by transitivity of $\equiv_\forall$ that $R[G] \equiv_\forall \Z[F]$. This shows that in Theorem 4.3 (1) implies (2) completing the proof.
 \end{proof}

Theorem 4.3 follows from the statement of Theorem \ref{thm45}. To see this,  note that since $R[G]$ satisfies the diagram of $\Z[F]$, then $R[G]$ contains $\Z[F]$;
so $R[G]$ is a model of $Th_{\forall}(Z[F])$ by Theorem \ref{thm45}. So every universal sentence of $L_2[\Z[F]]$ true in $Th_{\forall} \Z[F]$ is true in $R[G]$. But since $\Z[F] \subset
 R[G]$, every universal sentence of $L_2[\Z[F]]$ true in $R[G]$ must also be true in $\Z[F]$. Therefore, $R[G] \equiv_\forall R[F]$ and  Theorem \ref{thm43} follows.
Finally, the proof of Theorem \ref{thm44} goes through verbatim from the proof of Theorem \ref{thm45}.



\begin{thebibliography}{97}

\bibitem[BM]{BM} S. Bakulin and A.G. Myasnikov, Explicit Universal Axioms for Kaplansky Groups, in {\it {Elementary Theory of Group Rings and Related Topics}} (P. Baginski, B. Fine, A. Moldenhauer, G. Rosenberger and V. Shpilrain eds.), De Gruyter, 2020, 1-6.

\bibitem[B]{B} B. Baumslag, \textquotedblleft Residually Free
Groups,\textquotedblright\ {\it{Proc. London Math. Soc.}} (3) 17 (1967),
402-408.

\bibitem[BS]{BS} J.L. Bell and A.B. Slomson, \textit{Models and Ultraproducts: An
Introduction}, Second revised printing, North-Holland, Amsterdam, 1971.

\bibitem[CK]{CK} C.C. Chang and H.J. Keisler, {\it{Model Theory}}, Second Edition,
North-Holland, Amsterdam, 1977.

\bibitem[FGS]{FGS} B. Fine, A.M. Gaglione and D. Spellman, \textquotedblleft Elementary and Universal Theories of Nonabelian Commutative Transitive and CSA Groups, \textquotedblright\ in {\it{Infinite Groups: From the Past to the Future}} (P. Baginski, B. Fine and A.M. Gaglione eds.), World Scientific Press, 2017, 81-93.

\bibitem[FGRS1]{FGRS1} B. Fine, A.M. Gaglione, G. Rosenberger and D. Spellman, Elementary and Universal Equivalence of Group Rings, {\it Communications in Algebra} 48(7) (2020), 2740-2749.

\bibitem[FGRS2]{FGRS2} B. Fine, A.M. Gaglione, G. Rosenberger and D. Spellman, The Tarski Theorems and Elementary Equivalence of Group Rings, {\it {Advances in Pure Mathematics}} 7(2) (2017), 199-212.

\bibitem[FGRS3]{FGRS3} B. Fine, A.M. Gaglione, G. Rosenberger and D. Spellman, Orderable groups, elementary theory, and the Kaplansky conjecture, {\it{ Groups Complexity Cryptology}} 10(1) (2018), 43-52.

\bibitem[FGMRS]{FGMRS} B. Fine, A.M. Gaglione, A. Myasnikov, G.  Rosenberger and D. Spellman, {\it{The Elementary Theory of Groups}}, De Gruyter, 2015

\bibitem[GS]{GS} A. Gaglione and D. Spellman, Even More Model Theory of Free Groups, in {\it{Infinite Groups and Group Rings}}, edited by J. Corson, M. Dixon, M. Evans, F. Rohl, World Scientific Press, 1993, 37-40

\bibitem[LS]{LS} R.C.  Lyndon and P.E. Schupp, {\it{Combinatorial Group Theory}}, Springer-Verlag, 1977.

\bibitem[MKS]{MKS} W. Magnus, A. Karrass and D. Solitar, {\it{ Combinatorial Group Theory }}, Wiley 1966, Second  Edition, Dover Publications, New York, 1976.
	
\bibitem[M]{M} N. Myasnikov, Centralizers in Free Group Algebras and Nonsingular Curves, \textit{J.
Algebra} 516 (2018), 490-513.

\bibitem[MR]{MR} A.G. Myasnikov and V.N. Remeslennikov, Algebraic
geometry over group. II Logical foundations, \textit{J.
Algebra} 234 (2000), 225-275.

\bibitem[N]{N} B.H. Neumann, On ordered groups, 
{\it{Amer. J. Math.}} 71 (1949), 1-18.

\bibitem[P]{P} D.S. Passman, {\it{The Algebraic Structure of Group Rings}}, Dover,
Mineola, N.Y., 1985.

\bibitem[Re]{Re} V.N. Remeslennikov, $\exists $-free
groups, {\it{Siberian Math. J.}} 30(6) (1989),
153-157.

\bibitem[DR]{DR} D. Robinson, {\it{ A Course in the Theory of Groups}}, 2nd ed., Springer, New York, 1995. 

\end{thebibliography}
\end{document}